\documentclass[11pt, reqno]{amsart}
\usepackage{amsaddr}
\linespread{1}
\usepackage[latin1]{inputenc}
\usepackage{amsmath, amssymb}
\usepackage{centernot}
\usepackage{dsfont}
\usepackage{bbm}
\usepackage{float}
\usepackage[shortlabels]{enumitem}
\usepackage{graphicx}                  
\usepackage{amssymb}
\usepackage{amsfonts}
\RequirePackage{amsmath}
\RequirePackage{amsthm}

\usepackage{color}
\usepackage{dsfont}
\usepackage{geometry}
\geometry{
  top=1in,            
  inner=1in,
  outer=1in,
  bottom=1in,
  headheight=6ex,       
  headsep=4ex,          
}

\usepackage{fancyhdr}
\usepackage{lipsum} 
\newtheorem{theorem}{Theorem}[section]
\newtheorem*{thm*}{Theorem}
\newtheorem{proposition}[theorem]{Proposition}
\newtheorem{lem}[theorem]{Lemma}
\newtheorem{cor}[theorem]{Corollary}

\pagestyle{myheadings}
\usepackage{hyperref} 

\theoremstyle{definition}
\newtheorem{definition}[theorem]{Definition}
\newtheorem{remark}{Remark}[section]
\newtheorem{example}{Example}[section]


\DeclareMathOperator{\Var}{Var}

\DeclareMathOperator{\Ent}{Ent}

\DeclareMathOperator{\Ric}{Ric}
\providecommand{\abs}[1]{\left|#1\right|}
\providecommand{\norm}[1]{\left\lVert#1\right\rVert}
\providecommand{\hs}[1]{\left|#1\right|_{\mathrm{HS}}}

\newcommand{\R}{\mathbb{R}}
\newcommand{\N}{\mathbb{N}}

\newcommand{\C}{\mathcal{C}}

\renewcommand{\phi}{\varphi}
\renewcommand{\epsilon}{\varepsilon}
\newcommand{\1}{\mathds{1}}

\renewcommand{\P}{\mathrm{P}}
\renewcommand{\L}{\mathrm{L}}
\newcommand{\Div}{\nabla \cdot }

\title{A Bakry-\'Emery approach to Lipschitz transportation on manifolds}
\author{Pablo López-Rivera}
\date{\today}
\address{Université Paris Cité \& Sorbonne Université, CNRS, Laboratoire Jacques-Louis Lions (LJLL), F-75013 Paris, France}
\email{plopez@math.univ-paris-diderot.fr}
\calclayout
\begin{document}

\maketitle

\begin{abstract}
On weighted Riemannian manifolds we prove the existence of globally Lipschitz transport maps between the weight (probability) measure and log-Lipschitz perturbations of it, via Kim and Milman's diffusion transport map, assuming that the curvature-dimension condition $\mathrm{CD}(\rho_{1}, \infty)$ holds, as well as a second order version of it, namely $\Gamma_{3} \geq \rho_{2} \Gamma_{2}$. We get new results as corollaries to this result, as the preservation of Poincar\'e's inequality for the exponential measure on $(0,+\infty)$ when perturbed by a log-Lipschitz potential and a new growth estimate for the Monge map pushing forward the gamma distribution on $(0,+\infty)$ (then getting as a particular case the exponential one), via Laguerre's generator.
\end{abstract}

\section{Introduction}
\label{section1}
In the study of functional inequalities, the $d$-dimensional Gaussian space, $(\R^{d}, \gamma_{d})$, where $\gamma_{d}$ denotes the standard $d$-dimensional Gaussian measure, has been of central importance, as $\gamma_{d}$ satisfies fundamental model inequalities, as Poincar\'e's inequality: for each $f \colon \R^{d} \to \R$ smooth and square-integrable,
$$\Var_{\gamma_{d}}(f) \leq \mathbb{E}_{\gamma_{d}}[\abs{\nabla f}^{2}].$$
Given a second measure $\nu$ on $\R^{d}$ and a $C$-Lipschitz map $T\colon \R^{d} \to \R^{d}$ such that $T$ pushes forward $\gamma_{d}$ towards $\nu$, we note that it is easy to transport as well Gaussian Poincar\'e's inequality to $\nu$: let $g \colon \R^{d} \to \R$ be a smooth function and let us apply Gaussian Poincar\'e's inequality to $f = g \circ T$:
\begin{equation}
\label{eqfirstexample}
\Var_{\nu}(g) = \Var_{\gamma_{d}}(g\circ T) \leq \mathbb{E}_{\gamma_{d}}[\abs{\nabla (g\circ T)}^{2}] \leq \mathbb{E}_{\gamma_{d}}[\abs{DT}^{2} \abs{(\nabla g)\circ T}^{2}] \leq C^{2} \mathbb{E}_{\nu}[\abs{\nabla g}^{2}],
\end{equation}
where in the last inequality we stressed the fact that $T$ is a $C$-Lipschitz map. We remark that the same argument applies to log-Sobolev or isoperimetric inequalities.

In \cite{MR1800860} Caffarelli proved his famous contraction theorem, which states that for a measure $\nu$ on $\R^{d}$ which is uniformly log-concave (i.e., $\nu$ is of the form $d\nu(x) = \exp(-V)\, dx$ with $V\colon \R^{d} \to \R$ and such that there exists $\alpha >0$ such that for each $x \in \R^{d}$, $\nabla^{2} V(x) \succcurlyeq \alpha I_{d \times d}$, where $\succcurlyeq$ denotes the L\"owner order on the set of positive semidefinite matrices), then the Brenier map \cite{MR1100809} (i.e., the transport map solving Monge's problem; see \cite{villani2008optimal} for more context on Monge's problem), which pushes forward $\gamma_{d}$ towards $\nu$, is $C$-Lipschitz, with a constant $C$ which does not depend on the dimension $d$. Thus Caffarelli's regularity theorem plays an important role when one wants to obtain novel inequalities; not just functional but concentration and isoperimetric ones, which are still independent of the dimension parameter $d$ (see for example \cite{MR1894593} and \cite{MR2095937}).

There are some extensions to Caffarelli's theorem: for example in \cite{MR3752535} it was proven that the Brenier map between a uniformly log-concave measure and a compactly supported perturbation of it is Lipschitz, again with a constant not depending on the dimension. Unfortunately, we remark that there are no known analogs for Caffarelli's result for the Riemannian setting.

As it was seen in the transport of the Gaussian Poincar\'e's inequality (\ref{eqfirstexample}), optimality of the transport map ---in the sense of Monge's problem--- does not play any role in the proof. In particular, in this note we prove the existence of a globally Lipschitz transport map (not necessarily optimal in Monge's sense) between a measure on a manifold and a log-Lipschitz perturbation of it, using a construction originally developed in \cite{kimmilman2012} (and which traces its origins back to \cite{otto2000generalization}), revisited as well recently in \cite{klartag2021spectral}, \cite{neeman2022lipschitz}, \cite{mikulincer2022lipschitz} and \cite{fathi2023transportation}, which we will call the diffusion transport map, as it is based on the heat flow interpolation between two measures induced by a diffusion semigroup. Similar schemes, using Polchinski's flow and multiscale Bakry-\'Emery criteria, have been recently explored in \cite{serres2022behavior} and \cite{shenfeld2023exact}, for example.

In our approach, we consider a diffusion operator $\L$ on a Riemannian manifold $(M,g)$ which has an ergodic measure $\mu$. Fix a $K$-Lipschitz potential $V \colon M \to \R$ and define the perturbation of $\mu$ by $V$ as the measure $d\nu = e^{-V}\, d\mu$. Using the aforementioned construction, we are able to obtain a map $T \colon M \to M$ pushing forward the measure $\mu$ towards $\nu$.

Defining the carr\'e du champ operator $\Gamma$, its iteration $\Gamma_{2}$ and its second iteration $\Gamma_{3}$ (which will be precisely defined in Section \ref{section3}), if we suppose that there exist positive constants $\rho_{1}, \rho_{2}>0$ such that for each smooth and compactly supported function $f \colon M \to \R$, $\Gamma_{2}(f) \geq \rho_{1} \Gamma(f)$ (i.e, classical Bakry-\'Emery's curvature-dimension condition $\mathrm{CD}(\rho, \infty)$) and $\Gamma_{3}(f) \geq \rho_{2} \Gamma_{2}(f)$, then we are able to say that $T$ is $C$-Lipschitz, where $C = C(\rho_{1}, \rho_{2}, K)$. The following theorem is the precise statement of these ideas, corresponding to Theorem \ref{thmmainresult} in this note.

\begin{thm*}
Let $(M,g)$ be a complete and connected $d$-dimensional Riemannian manifold. Let $W \colon M \to \R$ be smooth and such that $\int_{M} e^{-W}d \mathrm{Vol} =1$. Consider the diffusion operator $\L=\Delta - \nabla W \cdot \nabla$. Let us assume that there exist constants $\rho_{1}, \rho_{2} > 0$ such that
	\begin{enumerate}[(i)]
	\item $\forall f \in \C^{\infty}_{\mathrm{c}}(M), \Gamma_{2}(f) \geq \rho_{1} \Gamma(f)$; and
	\item $\forall f \in \C^{\infty}_{\mathrm{c}}(M), \Gamma_{3}(f) \geq \rho_{2} \Gamma_{2}(f)$.
	\end{enumerate}
Let $V \colon M\to \R$ smooth and $K$-Lipschitz and define $d \mu = e^{-W}d \mathrm{Vol}$ and $d \nu = e^{-V} d\mu$. Then there exists a Lipschitz mapping $T \colon M \to M$ pushing forward $\mu$ towards $\nu$ which is $\exp\left(\sqrt{\frac{2 \pi}{\rho_{2}}}Ke^{\frac{K^{2}}{2 \rho_{1}}}\right)$-Lipschitz.
\end{thm*}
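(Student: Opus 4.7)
The approach is to specialize the Kim--Milman diffusion transport construction to the semigroup $(P_t)_{t\geq 0}$ generated by $\L$. Set $f_t := P_t e^{-V}$, so that, since $\mu$ is invariant for $P_t$, the family $(f_t \mu)_{t \geq 0}$ interpolates between $\nu$ at time $0$ and $\mu$ at time infinity. The transport map $T$ will be obtained as the large-time limit of the flow generated by the time-reversed vector field $w_s(x) = \nabla \log f_{\tau - s}(x)$, with $\Psi_0 = \mathrm{id}$ and $\tau \to \infty$; a standard continuity-equation computation, relying on the $\mu$-self-adjointness of $\L$, verifies that $\Psi_s$ pushes $f_\tau \mu$ to $f_{\tau-s}\mu$, and hence $T_\# \mu = \nu$ in the limit.

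Next, I differentiate the defining ODE in the spatial variable. The Jacobian of the flow satisfies the linear equation $\dot J_s = \Hess \log f_{\tau - s}(\Psi_s)\, J_s$, and Gr\"onwall's lemma, together with the change of variable $u = \tau - s$, yields
\[
\|DT(x)\|_{\mathrm{op}} \;\leq\; \exp\!\left(\int_0^\infty \big\|\Hess \log f_u\big\|_{\mathrm{op}}\!\bigl(\Psi_{\tau - u}(x)\bigr)\, du\right).
\]
The problem is thus reduced to an integrable pointwise bound on $\|\Hess \log P_t e^{-V}\|_{\mathrm{op}}$, independent of the base point.

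I decompose $\Hess \log f_t = \Hess f_t / f_t \,-\, (\nabla f_t \otimes \nabla f_t)/f_t^{\,2}$. The second, rank-one term has operator norm $|\nabla \log f_t|^2$, which is handled by the Bakry--Ledoux $L^1$-gradient estimate implied by $\mathrm{CD}(\rho_1,\infty)$: applied to $g = e^{-V}$ and combined with the pointwise inequality $|\nabla e^{-V}| \leq K e^{-V}$, it gives $|\nabla \log f_t| \leq K e^{-\rho_1 t}$. The delicate term is $\|\Hess f_t\|_{\mathrm{op}}/f_t$, to be controlled through the assumption $\Gamma_3 \geq \rho_2 \Gamma_2$, which, by iterating the Bakry--\'Emery argument, produces a $\Gamma_2$-commutation of the form $\sqrt{\Gamma_2(P_t g)} \leq e^{-\rho_2 t}\, P_t \sqrt{\Gamma_2(g)}$. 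Since no Hessian bound on $V$ is assumed, $\Gamma_2(e^{-V})$ may fail to be bounded, so I plan to smooth by a short initial slice of the semigroup---paying an $e^{K^2/(2\rho_1)}$ Herbst-type factor coming from the Gaussian concentration inherited from $\mathrm{CD}(\rho_1,\infty)$---and apply the $\Gamma_2$-commutation on the remaining interval. After optimizing the splitting, this should yield a Gaussian-in-$t$ pointwise bound on the Hessian term whose integral on $[0,\infty)$ matches $\sqrt{2\pi/\rho_2}\, K e^{K^2/(2\rho_1)}$.

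The main obstacle is exactly this last step: converting the iterated curvature-dimension condition $\Gamma_3 \geq \rho_2 \Gamma_2$ into an effective, dimension-free, integrable pointwise bound on $\|\Hess \log P_t e^{-V}\|_{\mathrm{op}}$ without any regularity on $V$ beyond $K$-Lipschitz. The gradient piece is essentially the classical Bakry--\'Emery calculation, but the Hessian piece requires combining both curvature hypotheses in a single estimate and absorbing the lack of $\Hess V$ via the semigroup smoothing. Once that estimate is in hand, exponentiating the Gr\"onwall bound produces the announced Lipschitz constant for $T$.
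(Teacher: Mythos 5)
Your construction and reduction coincide with the paper's: the Kim--Milman flow along $\nabla\log \P_t e^{-V}$, the Jacobian/Gr\"onwall lemma reducing everything to an integrable one-sided bound $\nabla^{2}\log\P_t e^{-V}\preceq\lambda(t)\,g$, and the closing estimate obtained by combining a $\Gamma_{2}$-commutation from $\Gamma_{3}\geq\rho_{2}\Gamma_{2}$ with semigroup smoothing and a Herbst-type factor from $\mathrm{CD}(\rho_{1},\infty)$. The step you flag as the main obstacle does close exactly as you plan: writing $\P_t=\P_{t/2}\P_{t/2}$ one gets $\Gamma_{2}(\P_t f)\leq\frac1t e^{-\rho_{2}t}\P_t(\Gamma_{1}(f))$ (commutation on one half, the regularization $\Gamma_{2}(\P_s h)\leq\frac1{2s}\P_s\Gamma_{1}(h)$ on the other, the latter from the monotonicity of $s\mapsto\P_s\Gamma_{1}(\P_{t-s}h)$); then $\Gamma_{1}(f)=f^{2}\abs{\nabla V}^{2}\leq K^{2}f^{2}$ and the local log-Sobolev inequality for $\P_t^{*}\delta_x$ give $\sqrt{\P_t(f^{2})}\leq e^{K^{2}/(2\rho_{1})}\P_t f$, yielding the integrand $Ke^{K^{2}/(2\rho_{1})}t^{-1/2}e^{-\rho_{2}t/2}$, whose integral is $\sqrt{2\pi/\rho_{2}}\,Ke^{K^{2}/(2\rho_{1})}$. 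Three points in your sketch need adjustment. First, the commutation to use is the $L^{2}$ form $\Gamma_{2}(\P_t h)\leq e^{-2\rho_{2}t}\P_t(\Gamma_{2}(h))$; the square-root form $\sqrt{\Gamma_{2}(\P_t h)}\leq e^{-\rho_{2}t}\P_t\sqrt{\Gamma_{2}(h)}$ you wrote is not known to follow from $\Gamma_{3}\geq\rho_{2}\Gamma_{2}$, since the proof of the $L^{1}$ version for $\Gamma_{1}$ relies on the chain rule $\Gamma(\phi(h))=\phi'(h)^{2}\Gamma(h)$, which has no $\Gamma_{2}$ analogue; the $L^{2}$ form suffices here. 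Second, to pass from $\Gamma_{2}(\P_t f)$ to $\hs{\nabla^{2}\P_t f}$, and hence to the Hessian's operator norm, you need $\Gamma_{2}(h)\geq\hs{\nabla^{2}h}^{2}$, i.e.\ $\Ric+\nabla^{2}W\succeq0$; this is where $\mathrm{CD}(\rho_{1},\infty)$ with $\rho_{1}>0$ enters a second time and should be made explicit. Third, the rank-one term $-\nabla f_t\otimes\nabla f_t/f_t^{2}$ is negative semi-definite and should simply be discarded in the one-sided bound; if you instead add its operator norm $K^{2}e^{-2\rho_{1}t}$ to $\lambda(t)$, the final Lipschitz constant acquires an extra factor $e^{K^{2}/(2\rho_{1})}$ and no longer matches the announced one.
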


The idea of working with $\Gamma_{3}$ is not new in the literature; it has been explored previously in \cite{zbMATH00559130}, \cite{zbMATH00563664} and \cite{MR3320893}. In particular, in \cite{MR3320893}, the condition $\Gamma_{3}(f) \geq \rho_{2} \Gamma_{2}(f)$ was employed to obtain regularity of the solutions of the PDE
$$\Delta f - \nabla W \cdot \nabla f = V - \int V \, d\mu$$
for Lipschitz data $V$, which corresponds to the linearization of Monge-Amp\`ere's equation associated to the (quadratic) optimal transport problem for $\mu$ and $\nu$, which coincides as well with the linearization of the diffusion transport map.

\subsection{Organization}
In Section \ref{section2} we introduce all the necessary preliminary notions, starting from diffusion operators on Riemannian manifolds, their probabilistic interpretation and the continuity equation in the distributional sense. In Section \ref{section3} we recall classical Bakry-\'Emery's $\Gamma$-calculus and we study a second order version of it, namely $\Gamma_{3} \geq \rho \Gamma_{2}$, and its consequences. In Section \ref{section4} we detail the construction of the diffusion transport map and we prove its Lipschitz regularity (i.e. Theorem \ref{thmmainresult}). Finally, in Section \ref{section5} we give some applications of the main result.

\subsection{Acknowledgments}
I want to thank my advisor, Max Fathi, for his guidance, corrections and continuous support with this project, and for introducing me to the subject. I also thank Maxime Laborde for his help. The author acknowledges the support of the French Agence Nationale de la Recherche (ANR) via the project CONVIVIALITY (ANR-23-CE40-0003). This project has received funding from the European Union's Horizon 2020 research and innovation programme under the Marie Sk\l{}odowska-Curie grant agreement No 945322. \includegraphics[scale=0.030]{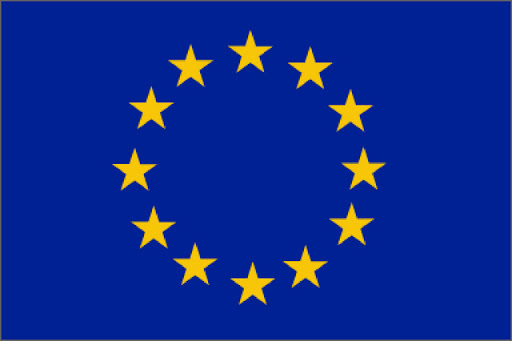}

\section{Preliminaries and notations}
\label{section2}
In this section we present all the main objects on which this note is based and their most essential properties; on the other hand we fix the notations for the rest of this note. We first introduce the language of Riemannian geometry in order to talk about diffusion operators on weighted manifolds, to then remind as well the probabilistic interpretation of these objects. Finally we recall the continuity equation in the context of measure-valued flows and the structure of its solutions.

\subsection{Markov diffusion generators on a manifold}

Let $(M,g)$ be a $d$-dimensional smooth Riemannian manifold which we assume to be complete and connected, unless otherwise stated, and let us denote its tangent bundle by $TM$. Usually the action of the metric $g$ onto two vector fields $X, Y \colon M \to TM$ will be succinctly written by $X\cdot Y := g(X,Y)$, omitting the dependence on $x \in M$, unless necessary. The metric $g$ induces a distance $d_{g}$ on $M$, which in turn generates the Riemannian volume measure $d\mathrm{Vol}$ on $M$ (i.e.,  the $d$-dimensional Hausdorff measure associated to the metric space $(M, d_{g})$).

Let $\nabla$ be the Riemannian gradient, which acts on smooth functions $f \colon M\to \R$ as a vector field by $\nabla f =(\nabla^{i} f)_{i=1}^{d}$, where $\nabla^{i}f = g^{ij} \partial_{j}f$ (using Einstein's summation convention to handle operations with tensors) and let $\nabla^{2}$ be the Hessian operator, which acts on smooth functions $f \colon M \to \R$ via $\nabla^{2} f = (\nabla^{i}\nabla^{j} f)_{i,j=1}^{d}$. Laplace-Beltrami's operator, denoted by $\Delta$, acts on smooth functions and this action can be characterized, for a smooth function $f\colon M \to \R$, by
$$\forall g \in \C^{\infty}_{\mathrm{c}}(M), \quad \int_{M} g \Delta f \, d\mathrm{Vol} = - \int_{M} \nabla f \cdot \nabla g \,d\mathrm{Vol},$$
where $\C^{\infty}_{\mathrm{c}}(M)$ is the set of real-valued smooth functions with compact support on $M$. We recall Bochner's formula: for each $f\colon M \to \R$ smooth,
\begin{equation}
\label{eqbochnerformula}
\frac12 \Delta(\abs{\nabla f}^{2}) = \abs{\nabla^{2} f}_{\mathrm{HS}}^{2} + \nabla f \cdot \nabla(\Delta f) + \Ric(\nabla f, \nabla f),
\end{equation}
where $\abs{\cdot}_{\mathrm{HS}}$ denotes the Hilbert-Schmidt norm of a tensor and $\Ric$ denotes the Ricci curvature tensor associated to $(M,g)$.

Now let $W \colon M \to \R$ be a smooth function such that $\int_{M} e^{-W}\, d\mathrm{Vol}=1$ to then define the (probability) measure $d\mu(x) := e^{-W} \, d\mathrm{Vol}$. We say that $(M,g,\mu)$ is a weighted manifold (that is, a Riemannian manifold equipped with a measure which has a smooth strictly positive density with respect to the respective volume measure).

We define the diffusion operator $\L$, which acts on smooth functions $f \colon M \to \R$ as
\begin{equation}
\label{eqoperatorldef}
\L f:= \Delta f - \nabla W \cdot \nabla f,
\end{equation}
which obviously has elliptic regularity. We note that the measure $\mu$ is invariant for $\L$: for each $f \in \C^{\infty}_{\mathrm{c}}(M)$,
\begin{equation}
\label{eqinvariantmeasure}
\int_{M}\L f \, d\mu =0.
\end{equation}
Moreover, it can be easily seen that it holds the following integration by parts formula: for all $f, h \in \C^{\infty}_{\mathrm{c}}(M)$,
\begin{equation}
\label{eqsymmetryipp}
\int_{M}f \L h\, d\mu = - \int_{M}\nabla f \cdot \nabla h\, d\mu,
\end{equation}
which in turn implies that $\L$ is symmetric or reversible with respect to the measure $\mu$: that is, for each $f, h \in \C^{\infty}_{\mathrm{c}}(M)$,
\begin{equation}
\label{eqdefsymmetry}
\int_{M}f \L h\, d\mu =  \int_{M}h \L f\, d\mu.
\end{equation}

\begin{remark}
\label{rkmuchotexto}
Suppose now that we have a diffusion operator $\L$ defined on $M$, a connected $d$-dimensional smooth manifold (note that we are not endowing $M$ with any Riemannian metric), which acts on smooth functions $f \colon M \to \R$ (in local coordinates) via 
\begin{equation}
\label{eqdefl}
\L f := a^{ij} \partial_{ij}^{2} f +b^{i}\partial_{i} f,
\end{equation}
such that the coefficients $x \mapsto a(x):= (a^{ij}(x))_{i,j =1}^{d}$, $x \mapsto b(x):= (b^{i}(x))_{i=1}^{d}$ are smooth functions on $M$. Let us assume that $\L$ is elliptic: that is, for each $x \in M$, the matrix $a(x)$ is symmetric and positive-definite; then the matrix $a^{-1} = (a_{ij}(x))_{i,j =1}^{d}$ can be regarded as a Riemannian metric on $M$ which we will denote by $g$. If we write $Z := b^{i} \partial_{i} \colon M \to TM$, then we can express $\L$ as
$$\L f = \Delta_{g} f + Zf,$$
where $\Delta_{g}$ denotes the Laplace-Beltrami operator on the (now) Riemannian manifold $(M, g)$. Moreover, let us suppose that $\L$ is invariant with respect to a measure $\mu$ of the form $d\mu = e^{-W}\, d\mathrm{Vol}_{g}$, where $\mathrm{Vol}_{g}$ denotes the Riemannian volume measure on $(M,g)$ and $W\colon M \to \R$ is a smooth function; then $Zf = -\nabla_{g} W \cdot \nabla_{g} f$, so finally we get
$$\L f = \Delta_{g} f - \nabla_{g} W \cdot \nabla_{g} f.$$
That is, if we start with a linear diffusion operator in the form (\ref{eqdefl}), we can always express it in the form (\ref{eqoperatorldef}), for a well-chosen Riemannian structure, which is natural for the operator $\L$. In other words, we can assume without loss of generality that $\L=\Delta - \nabla W \cdot \nabla$.
\end{remark}

From the perspective of operator theory, $\L$ could be regarded as well as an unbounded linear operator with domain $\mathcal{D}(\L) \subseteq L^{2}(\mu)$ such that $\C^{\infty}_{\mathrm{c}}(M) \subseteq \mathcal{D}(\L)$. Moreover, as we assumed that $(M,g)$ is complete, as $\L$ is an elliptic diffusion operator on a smooth, connected and complete Riemannian manifold, which is symmetric with respect to $\mu$, then Proposition 3.2.1 in \cite{bakry2013analysis} entails the essential self-adjointness of $\L$ on $\C^{\infty}_{\mathrm{c}}(M)$, or equivalently, the density of $\C^{\infty}_{\mathrm{c}}(M)$ into $\mathcal{D}(\L)$.

\begin{remark}
Following the discussion about the essential self-adjointness of $\L$, we point out two possible issues that could happen in the practice.
	\begin{itemize}
	\item First, sometimes we will not have the hypothesis of completeness of the manifold $(M,g)$ (this is the case when $M$ is an open ball or more generally an open connected domain $\mathcal{O} \neq \R^{d}$, both regarded as submanifolds of the Euclidean space $\R^{d}$; more generally, this is an issue which depends on the metric chosen on the space). Fortunately, essential self-adjointness is not an exclusive property of diffusion operators on complete manifolds.
	\item On the other hand, it could happen that the operator $\L$ is essentially self-adjoint, but with respect to another class of functions, not $\C^{\infty}_{\mathrm{c}}(M)$.
	\end{itemize}
\end{remark}

Following Hille-Yosida theory \cite{yosida1980functional}, if we suppose that $\L$ is essentially self-adjoint with respect to $\C^{\infty}_{\mathrm{c}}(M)$ (or another class of smooth functions dense in $L^{2}(\mu)$), then there exists a unique characteristic semigroup $(\P_{t})_{t \geq 0}$ on $L^{2}(\mu)$ associated to $\L$ solving the heat equation for $\L$: for each $f \in L^{2}(\mu)$, then $u(t,x) := \P_{t}f(x)$ is the unique solution of
\begin{equation}
\label{eqheatequationl}
\left\{
		\begin{aligned}
		\partial_{t} u(t,x) & = \L u(t,x)\\
		u(0,x)& = f(x);
		\end{aligned}
	\right.
\end{equation}
in other words, $\L$ corresponds to the infinitesimal generator of the semigroup $(\P_{t})_{t \geq 0}$.

We defined our framework around the infinitesimal generator $\L = \Delta - \nabla W \cdot \nabla$, but it is possible to translate these properties, in an equivalent way, into the language of the semigroup $(\P_{t})_{t \geq 0}$. The invariance (\ref{eqinvariantmeasure}) reads as having for each $f \in \C^{\infty}_{\mathrm{c}}(M)$, and any $t \geq 0$,
\begin{equation*}
\label{eqinvariantmeasuresemigroup}
\int_{M}\P_{t}f \, d\mu = \int_{M} f\, d\mu;
\end{equation*}
and the symmetry (\ref{eqdefsymmetry}) is the same as stating that for each $f, h \in \C^{\infty}_{\mathrm{c}}(M)$ and any $t \geq 0$,
\begin{equation*}
\label{eqdefsymmetrysemigroup}
\int_{M}f \P_{t}h\, d\mu =  \int_{M}h \P_{t}f\, d\mu.
\end{equation*}

Finally, we can see that $\L = \Delta - \nabla W \cdot \nabla$ is ergodic; i.e., if $f \in \C^{\infty}_{\mathrm{c}}(E)$ and $\L f =0$, then $f$ has to be identically constant. Indeed, if we suppose that $\L f =0$, then by (\ref{eqsymmetryipp}) we deduce that $\abs{\nabla f}^{2}=0$ in $M$, which in turn yields that $f$ is constant, as $M$ was assumed to be connected. The following proposition translates ergodicity into a property for the semigroup $(\P_{t})_{t \geq 0}$ (see for example \cite[Proposition 3.1.13]{bakry2013analysis}).

\begin{proposition}
\label{propergodicityl}
Suppose that $\L$ is ergodic and that the corresponding semigroup $(\P_{t})_{t \geq 0}$ is conservative, that is, for each $t \geq 0$, $\P_{t} \1 = \1$, where $\1$ denotes the constant function equal to 1 on $M$. If $\mu$ is a probability measure, then for each $f \in L^{2}(\mu)$, the following limit holds in the $L^{2}(\mu)$-sense:
$$\lim_{t \to \infty} \P_{t} f = \int_{E} f\, d\mu.$$
\end{proposition}

\subsection{The probabilistic counterpart of $\L$}

As we were able (via (\ref{eqheatequationl})) to construct the characteristic semigroup associated to $\L$, then we can study $\L$ using probabilistic techniques. First of all, for any $t \geq 0$, we denote by $\P_{t}^{*}$ the dual semigroup, which acts on a measure $\nu \in \mathcal{P}$ by
$$\forall f \in \C_{\mathrm{b}}(M), \quad \int_{M}f \, d(\P_{t}^{*} \nu) =  \int_{M} \P_{t} f \, d\nu,$$
where $ \C_{\mathrm{b}}(M)$ denotes the space of bounded and continuous real-valued functions on $(M, g)$. Now let $(X_{t})_{t \geq 0}$ be a Markov process (defined on a probability space $(\Omega, \mathcal{F}, \mathbb{P})$) such that $X_{0} \sim \nu$ and that for each $t \geq 0$,
\begin{equation}
\label{eqlawdiffusion}
\forall f \in \C_{\mathrm{b}}(M), \quad \mathbb{E}\left[f(X_{t})\right] = \int_{M} f \, d(\P_{t}^{*} \nu).
\end{equation}
In particular, if $x \in M$ and $\nu = \delta_{x}$ is the Dirac measure on $x$, then $\mathbb{E}\left[f(X_{t})\right] = \P_{t}f(x)$.

The Markov process $(X_{t})_{t \geq 0}$ defined above is a diffusion process (with random initial condition $\nu \in \mathcal{P}(M)$) on $M$ satisfying the following stochastic differential equation (SDE):
\begin{equation}
\label{eqsdediffusion}
\left\{
	\begin{aligned}
	d X_{t} &= \sqrt{2}\, dB_{t} - \nabla W(X_{t})\, dt\\
	X_{0} &\sim \nu,
	\end{aligned}
\right.
\end{equation}
where $(B_{t})_{t \geq 0}$ is the Brownian motion on $(M,g)$ (see for example \cite{MR1030543} for a succinct introduction to the language of stochastic calculus on manifolds).

If we write, for each $t \geq 0$, $\rho_{t} := \mathrm{Law}(X_{t})$ (in particular, $\rho_{0} = \nu$), then $(\rho_{t})_{t \geq 0}$ satisfies the Fokker-Planck equation (which is adjoint of the heat equation (\ref{eqheatequationl}))
\begin{equation}
\label{eqfokkerplanck}
\partial_{t} \rho_{t} = \L^{*} \rho_{t} = \Delta \rho_{t} + \nabla W \cdot \nabla \rho_{t} + \rho_{t} \Delta W
\end{equation}
in the distributional sense; i.e., for all $t >0$ and each smooth function with compact support $\varphi\colon M\to \R$,
$$\frac{d}{dt} \int_{M} \phi \, d\rho_{t} =  \int_{M} \L \phi \, d\rho_{t} = \int_{M} \left(\Delta \phi - \nabla W \cdot \nabla \phi \right)\, d\rho_{t}.$$

\subsection{About the continuity equation}
Fix $A_{\bullet}\colon [0,+\infty) \times M \to TM$ a time-depending vector field on $M$. Given a fixed probability measure $\nu$ on $M$, we are interested in solving the continuity equation on $M$, with velocity $A_{\bullet}$ and initial condition $\nu$,
\begin{equation}
	\label{eqpropcontinuityequation}
	\left\{
		\begin{aligned}
		\partial_{t} \rho_{t} + \Div (\rho_{t} A_{t})&=0, \quad t >0\\
		\rho_{0} &= \nu,
		\end{aligned}
	\right.
\end{equation}
where $\Div$ denotes the divergence operator on $(M,g)$, which acts on a vector field $Z\colon M \to TM$ by
$$\forall \phi \in \C^{\infty}_{\mathrm{c}}(M), \quad \int_{M}\phi \Div Z \, d \mathrm{Vol} = - \int_{M} \nabla \phi \cdot Z \, d \mathrm{Vol}.$$

We will be interested in measure-valued solutions for (\ref{eqpropcontinuityequation}). To be precise, we say that $(\rho_{t})_{t \geq 0}$, a sequence of probability measures on $M$ with $\rho_{0} = \nu$, satisfies the continuity equation (\ref{eqpropcontinuityequation}) in the distributional sense, if for any $t >0$ and each smooth function with compact support $\varphi\colon M\to \R$,
\begin{equation*}
\frac{d}{dt} \int_{M} \phi \, d\rho_{t} = \int_{M} \nabla \varphi \cdot A_{t} \, d\rho_{t}.
\end{equation*}

Now we introduce the flow of diffeomorphisms $(S_{t})_{t \geq 0}$ induced by the vector field $A_{\bullet}$, which is defined by
\begin{equation}
\label{eqodeflowat}
\left\{
		\begin{aligned}
		\frac{d}{dt}S_{t}(x) &= A_{t}(S_{t}(x)), \quad t >0\\
		S_{0}(x)&=x,
		\end{aligned}
	\right.
\end{equation}
which is well-defined if $A_{\bullet}\colon$ is at least locally Lipschitz. The following classic result (see for example \cite[Chapter 1]{villani2008optimal}) characterizes weak solutions of the system (\ref{eqpropcontinuityequation}) in terms of the associated flow $(S_{t})_{t \geq 0}$.

\begin{theorem}
\label{thmcontinuityequation}
Let $(M,g)$ be a complete and connected Riemannian manifold and fix $A_{\bullet}\colon [0,+\infty) \times M \to TM$ a locally Lipschitz time-depending vector field on $M$. Let $\nu$ be a probability measure on $M$ and let $(\rho_{t})_{t \geq 0}$ be a sequence of probability measures on $M$ such that $\rho_{0}=\nu$, which is continuous on $t \geq 0$ (for the weak topology) and such that
\begin{equation}
\label{eqintegrflowcontinuity}
\int_{0}^{+\infty} \int_{M} \abs{A_{t}(x)} \, d\rho_{t}\, dt < +\infty.
\end{equation}
Then $(\rho_{t})_{t \geq 0}$ is a distributional solution to the continuity equation
	\begin{equation*}
	\left\{
		\begin{aligned}
		\partial_{t} \rho_{t} + \Div (\rho_{t} A_{t})&=0, \quad t >0\\
		\rho_{0} &= \nu
		\end{aligned}
	\right.
	\end{equation*}
if and only if $\rho_{t} = {S_{t}}_{\#} \nu$, where $(S_{t})_{t \geq 0}$ is the flow of diffeomorphisms associated to $A_{\bullet}$, in the sense of (\ref{eqodeflowat}).
\end{theorem}

\section{Revisiting Bakry-\'Emery's $\Gamma$-calculus}
\label{section3}
In this section we revisit Bakry-\'Emery's $\Gamma$-calculus, firstly recalling the basic definitions of $\Gamma$, its iteration $\Gamma_{2}$ and the curvature-dimension condition $\mathrm{CD}(\rho, \infty)$ together with its most essential consequences for our work. Secondly, we study further iterations of this theory, which give birth to the operators $\Gamma_{n}$. In particular, we will be interested in higher analogs $\mathrm{CD}(\rho, \infty)$, in the sense of $\Gamma_{n+1} \geq \rho_{n} \Gamma_{n}$, and their consequences for the associated semigroup $(\P_{t})_{t \geq 0}$.

We assume, unless we express the contrary, that $(M,g)$ is a complete and connected Riemannian manifold with weight $d\mu = e^{-W}\, d \mathrm{Vol}$ and we consider $\L = \Delta - \nabla W \cdot \nabla$.

\subsection{Classical $\Gamma$-calculus}

For each $f,h \in \C^{\infty}_{\mathrm{c}}(M)$, we define the usual carr\'e du champ operator $\Gamma$ as
\begin{equation*}
\label{eqdefgamma}
\Gamma(f,h) := \frac12\left(\L(fh) - f\,\L h - h\,\L f\right).
\end{equation*}
$\Gamma$ is a bilinear symmetric form which is moreover positive: for all $f \in \C^{\infty}_{\mathrm{c}}(M)$, $\Gamma(f,f) \geq 0$. We mostly work with evaluations of the form $\Gamma(f,f)$, so we define, doing an abuse of notation, $\Gamma(f) := \Gamma(f,f)$. In the case of a Markov diffusion operator $\L$ as (\ref{eqoperatorldef}), then $\Gamma$ has an explicit and natural representation in terms of Riemannian gradient:
\begin{equation}
\label{eqgammadiffusion}
\Gamma(f) = \abs{\nabla f}^{2}.
\end{equation}

Using (\ref{eqgammadiffusion}), we also have the Riemannian distance $d_{g}$ can be written in terms of the operator $\Gamma$. Indeed, for each $x,y \in M$,
\begin{equation*}
\label{eqriemanniandistancegamma}
d_{g}(x,y) = \sup_{\substack{f \in \C^{\infty}_{\mathrm{c}}(M); \\ \Gamma(f) \leq 1}} \left(f(x)- f(y)\right).
\end{equation*}
This directly leads to the following characterization of Lipschitz functions, which plays an important role in this note.

\begin{proposition}
\label{proplipschitzgamma}
Let $f \colon M \to \R$ be a smooth function. Then $f$ is $K$-Lipschitz for the distance $d_{g}$ if and only if $\sqrt{\Gamma(f)} \leq K$ uniformly on $M$.
\end{proposition}

In their seminal work \cite{MR0889476}, Bakry and \'Emery defined the iterated carr\'e du champ operator $\Gamma_{2}$: for $f,g \in \C^{\infty}_{\mathrm{c}}(M)$,
\begin{equation*}
\label{eqdefgamma2}
\Gamma_{2}(f,h) := \frac12\left(\L\Gamma(f,h) - \Gamma(f,\L h) - \Gamma(h, \L f)\right).
\end{equation*}
As we did so with $\Gamma$, we set $\Gamma_{2}(f):= \Gamma_{2}(f,f)$. Now if $\L = \Delta - \nabla W \cdot \nabla$, then using Bochner's formula (\ref{eqbochnerformula}) we can deduce an explicit formula for $\Gamma_{2}$:
\begin{equation}
\label{eqgamma2diffusion}
\Gamma_{2}(f) = \abs{\nabla^{2} f}_{\mathrm{HS}}^{2} + \Ric(\nabla f, \nabla f) + \nabla^{2} W(\nabla f, \nabla f).
\end{equation}
With both definitions of $\Gamma$ and $\Gamma_{2}$, we recall the classical $\mathrm{CD}(\rho, \infty)$ curvature-dimension condition, primevally defined by Bakry and \'Emery.

\begin{definition}[Curvature-dimension condition $\mathrm{CD}(\rho, \infty)$]
\label{defnbakryemerycd}
We say that the generator $\L$ satisfies the curvature-dimension condition $\mathrm{CD}(\rho, \infty)$ if there exists $\rho \in \R$ such that for each $f \in \C^{\infty}_{\mathrm{c}}(E)$,
$$\Gamma_{2}(f) \geq \rho \Gamma(f).$$
\end{definition}

\begin{remark}
\label{rkbakryemeryricci}
For the weighted manifold $(M, g, e^{-W} d\mathrm{Vol})$ we define the Bakry-\'Emery-Ricci tensor $\Ric_{W}:= \Ric + \nabla^{2} W$. Then we see that $\mathrm{CD}(\rho, \infty)$ holds if and only if $\Ric_{W}  \succeq \rho g$. More generally, the Bakry-\'Emery-Ricci tensor is an object of interest in itself in the context of weighted manifolds as it encodes the structure of the weight measure $e^{-W} d\mathrm{Vol}$ into a tensor. Assuming bounds on $\Ric_{W}$ implies as well topological and geometrical properties (see for example \cite{MR2016700}).
\end{remark}

The curvature-dimension condition $\mathrm{CD}(\rho, \infty)$ entails lots of properties for $\L$ and its associated semigroup $(\P_{t})_{t \geq 0}$. First of all, if we suppose that $(M,g)$ is complete and $\mathrm{CD}(\rho, \infty)$ holds, then its associated semigroup is conservative or non-explosive: for each $t \geq 0$, $\P_{t} \1 = \1$, where $\1$ denotes the constant function equal to 1 on $M$ (see for example Theorem 3.2.6 in \cite{bakry2013analysis}). Equivalently, if $\zeta$ denotes the explosion time of the process $(X_{t})$ generated by $(\P_{t})_{t \geq 0}$, then $\mathbb{P}(\zeta = +\infty)=1$.

Under $\mathrm{CD}(\rho, \infty)$, the invariant measure $\mu$ satisfies at the same time the Poincar\'e and logarithmic Sobolev inequalities (see for example Propositions 4.8.1 and 5.7.1 in \cite{bakry2013analysis}).
\begin{theorem}
\label{thmpoincareandlogsobolev}
Suppose $\mathrm{CD}(\rho, \infty)$ holds for $\rho >0$. Then:
	\begin{enumerate}[(a)]
	\item $(M, g, \mu)$ satisfies a Poincar\'e inequality with constant $\frac1\rho$: that is, for each $f \in \C^{\infty}_{\mathrm{c}}(M)$,
	$$\Var_{\mu}(f) \leq \frac1\rho \int_{M} \Gamma(f)\, d\mu.$$
	\item $(M, g, \mu)$ satisfies a log-Sobolev inequality with constant $\frac1\rho$: that is, for each $f \in \C^{\infty}_{\mathrm{c}}(M)$,
	$$\Ent_{\mu}(f^{2}) \leq \frac2\rho \int_{M} \Gamma(f)\, d\mu,$$
	where $\Ent_{\mu}(\cdot)$ denotes the entropy with respect to the measure $\mu$: for a non-negative measurable function $h\colon M \to \R$, $\Ent_{\mu}(h):= \int_{M} h \log h \, d\mu - \left(\int_{M} h \, d\mu\right) \log \left(\int_{M} h \, d\mu\right).$
	\end{enumerate}
\end{theorem}

Another remarkable consequence of $\mathrm{CD}(\rho, \infty)$ is reflected analytically on the semigroup, via the local Poincar\'e and logarithmic Sobolev inequalities (for a proof, see for example Theorems 4.7.2 and 5.5.2 in \cite{bakry2013analysis}).
\begin{theorem}
\label{thmlocalinequalitiescurvature}
The following assertions are equivalent.
	\begin{enumerate}[(i)]
	\item $\mathrm{CD}(\rho, \infty)$ holds.
	\item For each $f \in \C^{\infty}_{\mathrm{c}}(M)$ and every $t \geq 0$,
	$$\Gamma(\P_{t}f) \leq e^{-2\rho t} \P_{t}(\Gamma(f)).$$
	\item For each $f \in \C^{\infty}_{\mathrm{c}}(M)$ and every $t \geq 0$,
	$$\sqrt{\Gamma(\P_{t}f)} \leq e^{-\rho t} \P_{t}(\sqrt{\Gamma(f)}).$$
	\item For each $f \in \C^{\infty}_{\mathrm{c}}(M)$ and every $t \geq 0$, if $\rho \neq 0$,
	$$\P_{t}(f^{2})-(\P_{t}f)^{2} \leq \frac{1-e^{-2\rho t}}{\rho}\P_{t}(\Gamma(f));$$
	if $\rho =0$, then
	$$\P_{t}(f^{2})-(\P_{t}f)^{2} \leq 2t \P_{t}(\Gamma(f)).$$
	\item For each $f \in \C^{\infty}_{\mathrm{c}}(M)$ positive and every $t \geq 0$, if $\rho \neq 0$,
	$$\P_{t}(f^{2} \log f^{2})-\P_{t} (f^{2}) \log \P_{t}(f^{2}) \leq 2 \frac{1-e^{-2\rho t}}{\rho}\P_{t}\left(\Gamma(f)\right);$$
	if $\rho =0$, then
	$$\P_{t}(f^{2} \log f^{2})-\P_{t} (f^{2}) \log \P_{t}(f^{2}) \leq 4t\P_{t}\left(\Gamma(f)\right).$$
	\end{enumerate}
\end{theorem}

\begin{remark}
In this note we aim to develop and use properties arising from $\Gamma$-calculus theory, which is capable to give insights on the analytical or probabilistic object represented by the generator $\L$ in terms of more manipulable objects (algebraically speaking) such as the $\Gamma_{n}$ operators (to be defined in the next subsection). To do so, a class of functions $\mathcal{A}_{0}$ has to be fixed, in order to define those functionals and work properly with them. In fact, this was already done (implicitly) in the very first paragraphs of this subsection, by taking $\mathcal{A}_{0} = \C^{\infty}_{\mathrm{c}}(M)$ as $\L$ is essentially self-adjoint on this class of functions (c.f. \cite[Chapter 3]{bakry2013analysis}).
\end{remark}

\subsection{Further iterations of $\Gamma$-calculus}

As it was seen before, the construction of $\Gamma_{2}$ was iterated from $\Gamma$, so it is possible to go further and construct $\Gamma_{n}$, for $n \geq 3$; this idea was already explored before in \cite{zbMATH00559130}, \cite{zbMATH00563664} and \cite{MR3320893}. If we set $\Gamma_{0}(f,h):= fh$, then for $n\geq 0$ and $f,h \in \C^{\infty}_{\mathrm{c}}(M)$ we define
\begin{equation}
\label{eqdefgamman}
\Gamma_{n+1}(f,h) := \frac12\left(\L\Gamma_{n}(f,h) - \Gamma_{n}(f,\L h) - \Gamma_{n}(h, \L f)\right),
\end{equation}
and again we set $\Gamma_{n}(f):= \Gamma_{n}(f,f)$.

\begin{remark}
Iterations in the sense of (\ref{eqdefgamman}) are consistent with classical $\Gamma$-calculus, as $\Gamma_{1} = \Gamma$. For the sequel, we will always assume that the iterations start from $\Gamma_{0}$: that is, $\Gamma_{1}$ is the first iteration, $\Gamma_{2}$ the second and so on. For the purposes of this note we are concerned only up to the third iteration, that is, $\Gamma_{3}$, but the results in this section are stated and proved for general iterations.
\end{remark}

The goal of this section is to obtain analytical bounds for the semigroup $(\P_{t})_{t \geq 0}$ assuming that $\Gamma_{n+1}(f) \geq \rho_{n} \Gamma_{n}(f)$, in the spirit of Theorem \ref{thmlocalinequalitiescurvature}. Now we provide some illustrative examples of diffusion operators satisfying these criteria.

\begin{example}[Laplace-Beltrami operator on $\mathbb{S}^{d}$]
\label{explesphere}
Let $d \geq 2$ and consider the $d$-dimensional sphere $\mathbb{S}^{d} := \{x \in \R^{d+1}: \abs{x}=1\}$ together with its standard Riemannian metric $g$ (induced by its embedding on $\R^{d+1}$), which has a constant Ricci tensor: $\Ric = (d-1) g$. Let us fix $\mu = \mathrm{Vol}$, the uniform measure on $\mathbb{S}^{d}$, and let $\L = \Delta$, the Laplace-Beltrami operator on $\mathbb{S}^{d}$, which is essentially self-adjoint on $\C^{\infty}_{\mathrm{c}}(M) = \mathcal{C}^{\infty}(M)$ as $\mathbb{S}^{d}$ is complete.

The stochastic process generated by $\L$ is the (rescaled) Brownian motion on $\mathbb{S}^{d}$: $X_{t} = \sqrt2 B_{t}$, for $t \geq 0$, where $(B_{t})_{t \geq 0}$ the Brownian motion on the sphere.  Following the previous discusion, we have immediately that for $f \colon \mathbb{S}^{d}\to \R$ smooth,
$$\Gamma(f) = \abs{\nabla f}^{2}.$$
For $\Gamma_{2}$, we can stress the fact that Ricci tensor is constant, so by (\ref{eqgamma2diffusion}), we get
\begin{align*}
\Gamma_{2}(f)&= \abs{\nabla^{2} f}_{\mathrm{HS}}^{2} + (d-1) \abs{\nabla f}^{2}
\end{align*}
and by polarization we get
\begin{align*}
\Gamma_{2}(f,h)&= \langle \nabla^{2} f, \nabla^{2} h \rangle_{\mathrm{HS}} + (d-1) \nabla f\cdot \nabla h.
\end{align*}
We can observe that $\Gamma_{2}(f) \geq (d-1) \Gamma(f)$, so $\L$ satisfies $\mathrm{CD}(d-1, \infty)$. In particular, we have that the process $(X_{t})_{t \geq 0}$ is non-explosive and thus neither the Brownian motion on the sphere.

Now let us compute $\Gamma_{3}$:
\begin{align*}
\Gamma_{3}(f) &= \frac12 \Delta \Gamma_{2}(f) - \Gamma_{2}(f, \Delta f)\\
&=\left(\frac12 \Delta \abs{\nabla^{2} f}_{\mathrm{HS}}^{2} + \frac12 (d-1) \Delta \abs{\nabla f}^{2}\right) - \left(\langle \nabla^{2} f, \nabla^{2} (\Delta f) \rangle_{\mathrm{HS}} + (d-1)  \nabla f\cdot \nabla(\Delta f)\right).
\end{align*}
Using Bochner's formula (\ref{eqbochnerformula}) and again the constant Ricci curvature, we obtain
$$\frac12 (d-1) \Delta \abs{\nabla f}^{2} = (d-1) \abs{\nabla^{2} f}_{\mathrm{HS}}^{2} + (d-1) \nabla f \cdot \nabla(\Delta f) + (d-1)^{2}\abs{\nabla f}^{2},$$
\end{example}
so
\begin{align*}
\Gamma_{3}(f) &= \frac12 \Delta \abs{\nabla^{2} f}_{\mathrm{HS}}^{2} - \langle \nabla^{2} f, \nabla^{2} (\Delta f) \rangle_{\mathrm{HS}} + (d-1) \abs{\nabla^{2} f}_{\mathrm{HS}}^{2} + (d-1)^{2}\abs{\nabla f}^{2}\\
&=\frac12 \Delta \abs{\nabla^{2} f}_{\mathrm{HS}}^{2} - \langle \nabla^{2} f, \nabla^{2} (\Delta f) \rangle_{\mathrm{HS}} + (d-1) \Gamma_{2}(f).
\end{align*}
Now if we choose a local system of coordinates, then we are able to use again Bochner's formula on each coordinate of $\nabla f$:
\begin{align*}
\frac12 \Delta \abs{\nabla \nabla^{i} f}^{2} &= \abs{\nabla^{2} \nabla^{i} f}_{\mathrm{HS}}^{2} + \nabla \nabla^{i} f \cdot \nabla(\Delta (\nabla^{i} f)) + \Ric(\nabla \nabla^{i} f, \nabla \nabla^{i}f)\\
&= \abs{\nabla^{2} \nabla^{i} f}_{\mathrm{HS}}^{2} + \nabla \nabla^{i} f \cdot \nabla \nabla^{i} (\Delta f) + \Ric(\nabla \nabla^{i} f, \nabla \nabla^{i}f)\\
&= \abs{\nabla^{2} \nabla^{i} f}_{\mathrm{HS}}^{2} + \nabla \nabla^{i} f \cdot \nabla \nabla^{i} (\Delta f) + (d-1)g(\nabla \nabla^{i} f, \nabla \nabla^{i}f)\\
&\geq \abs{\nabla^{2} \nabla^{i} f}_{\mathrm{HS}}^{2} + \nabla \nabla^{i} f \cdot \nabla \nabla^{i} (\Delta f),
\end{align*}
so if we sum in $i$, we get
$$\frac12 \Delta \abs{\nabla^{2} f}_{\mathrm{HS}}^{2} \geq \abs{\nabla^{3} f}_{\mathrm{HS}}^{2} + \langle \nabla^{2} f, \nabla^{2}(\Delta f) \rangle,$$
so
$$\Gamma_{3}(f) \geq \abs{\nabla^{3} f}_{\mathrm{HS}}^{2} + (d-1) \Gamma_{2}(f);$$
thus we conclude that $\Gamma_{3}(f) \geq (d-1) \Gamma_{2}(f)$.

\begin{example}[Laguerre generator on $(0,+\infty)^{d}$]
\label{explelaguerre}
Fix $p \in (0,+\infty)^{d}$ and define on $(0,+\infty)^{d}$ the diffusion operator
$$\L_{p} f := \sum_{i=1}^{d} x_{i} \frac{\partial^{2} f}{\partial x_{i}^{2}} + \sum_{i=1}^{d}(p_{i}-x_{i}) \frac{\partial f}{\partial x_{i}},$$
acting on smooth functions $f\colon (0,+\infty)^{d} \to \R$ and let $\mu$ be the multivariate gamma law on $(0,+\infty)^{d}$; i.e., $d\mu = \prod_{i=1}^{d}\frac1{\gamma(p_{i})} x_{i}^{p_{i}-1}e^{-x_{i}}\, dx_{i}$ (here $\gamma$ denotes the usual gamma function). Following Remark \ref{rkmuchotexto}, here the function $x \mapsto (\frac1{x_{i}} \delta_{ij})$ defines a Riemannian metric $g$ on $(0,+\infty)^{d}$.

Its associated diffusion process $(X_{t})_{t \geq 0}$ is non-explosive as soon as for each $1 \leq i \leq d$, $p_{i} \geq 1$ \cite{MR1997032} and on the other hand, it is known (see for example \cite{MR0649183}) that if for each $1 \leq i \leq d$, $p_{i} \geq \frac32$, then $\L_{p}$ is essentially self-adjoint on $\C^{\infty}_{\mathrm{c}}((0,+\infty)^{d})$, so we will restrict our attention towards $p_{i} \geq \frac32$.

$\Gamma_{n}$ operators have an explicit representation on terms of the derivatives of $f$ (see for example \cite{MR3320893}):
\begin{align*}
\Gamma (f) &= \sum_{i=1}^d x_i \left(\frac{\partial f}{\partial x_i}\right)^2 \\
\Gamma_2(f) &= \sum_{i,j=1}^d x_i x_j \left(\frac{\partial^2 f}{\partial x_i\partial x_j} \right)^2
    + \sum_{i=1}^d x_i \, \frac{\partial f}{\partial x_i} \,  \frac{\partial^2 f}{\partial x_i^2}
    + \frac {1}{2} \sum_{i=1}^d (p_i+x_i) \left( \frac{\partial f}{\partial x_i}\right)^2 \\
 \Gamma _3(f) & = \sum_{i,j, k=1}^d x_i x_j x_k \left(\frac{\partial^3 f}{\partial x_i\partial x_j\partial x_k} \right)^2
     + 3 \sum_{i,j=1}^d x_i x_j  \, \frac{\partial^2 f}{\partial x_i\partial x_j} \, \frac{\partial^3 f}{\partial x_i^2\partial x_j} \\
     &\,\,\,\,\,\,\,\,\,  + \frac {3}{2} \sum_{i,j=1}^d (p_i+x_i) x_j\left(\frac{\partial^2 f}{\partial x_i\partial x_j} \right)^2
       +  \frac {3}{2}  \sum_{i=1}^d x_i \left(\frac{\partial^2 f}{\partial x_i^2}\right)^2   \\
&\,\,\,\,\,\,\,\,\,    + \frac {3}{2} \sum_{i=1}^d x_i \, \frac{\partial f}{\partial x_i} \,  \frac{\partial^2 f}{\partial x_i^2}
       +  \frac {1}{4}  \sum_{i=1}^d (3p_i +x_i) \left(\frac{\partial f}{\partial x_i}\right)^2.
\end{align*}

As $p_{i} \geq \frac32$, we can easily observe that $\Gamma_{2}(f) \geq \frac12 \Gamma(f)$ and that $\Gamma_{3}(f) \geq \frac12 \Gamma_{2}(f)$.

	\begin{remark}
	\label{rkexpdistrlaguerre}
	In the special case when $d =1$ and $p=1$, then $\mu$ corresponds to the exponential measure on $(0,+\infty)$. $\L_{p}$ is not essentially self-adjoint on $\C^{\infty}_{\mathrm{c}}((0,+\infty))$ but it is on the space $\C^{\infty}_{\mathrm{c,\, Neu}}((0,+\infty))$, defined by
	$$\C^{\infty}_{\mathrm{c,\, Neu}}((0,+\infty)) := \left\{f \in \C^{\infty}_{\mathrm{c}}((0,+\infty)) : \lim_{x \to 0} x^{p} e^{-x} f'(x) =0 \right\} \subseteq \C^{\infty}_{\mathrm{c}}((0,+\infty)),$$
	which could be interpreted as imposing a Neumann boundary condition at 0 on $(0, +\infty)$. On the other hand, it can be seen that in this case, it holds that $\Gamma_{2}(f) \geq \frac12 \Gamma(f)$ and that $\Gamma_{3}(f) \geq \frac12 \Gamma_{2}(f)$
	\end{remark}
\end{example}

Now we show a technical lemma which will be the basis of our calculations. It corresponds to a classical result when $n=1$ \cite{bakry2013analysis}, which is used to obtain Theorem \ref{thmlocalinequalitiescurvature}, but for the sake of completeness we provide a full proof.

\begin{lem}
\label{lemma1}
Let $n \geq 0$, $f \in \C^{\infty}_{\mathrm{c}}(M)$, $t \geq 0$ and $x \in M$. We define $\Lambda_{n}\colon \R_{+} \to \R_{+}$ as
$$\Lambda_{n}(s) := \P_{s}(\Gamma_{n}(\P_{t-s}f))(x).$$
Then for each $t \geq s$,
	\begin{equation*}
	\frac{d}{ds} \Lambda_{n}(s) = 2 \P_{s}(\Gamma_{n+1}(\P_{t-s}f)) = 2\Lambda_{n+1}(s).
	\end{equation*}
\end{lem}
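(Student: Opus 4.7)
The plan is to directly differentiate $\Lambda_n(s) = \P_s(\Gamma_n(\P_{t-s}f))(x)$ using the product/chain rule, combining the forward Kolmogorov equation $\partial_s \P_s h = \L \P_s h = \P_s \L h$ with the backward equation $\partial_s \P_{t-s}f = -\L \P_{t-s}f$, and then recognize the result as $2\Gamma_{n+1}$ through its recursive definition.

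More concretely, I would set $g_s := \P_{t-s}f$ so that $\Lambda_n(s) = \P_s(\Gamma_n(g_s))(x)$, and treat $\Lambda_n$ as a composition with two moving parts: the outer semigroup in time $s$, and the inner argument $g_s$ which also depends on $s$. Differentiating,
\begin{equation*}
\frac{d}{ds}\Lambda_n(s) = \P_s\bigl(\L\,\Gamma_n(g_s)\bigr)(x) + \P_s\!\left(\frac{d}{ds}\Gamma_n(g_s)\right)\!(x).
\end{equation*}
Since $\Gamma_n$ is a symmetric bilinear form and $\partial_s g_s = -\L g_s$, the second term rewrites as $\P_s(2\Gamma_n(g_s, -\L g_s))(x) = -2\P_s(\Gamma_n(g_s, \L g_s))(x)$. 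Substituting back and factoring out a $2$,
\begin{equation*}
\frac{d}{ds}\Lambda_n(s) = 2\,\P_s\!\left(\tfrac{1}{2}\L\,\Gamma_n(g_s) - \Gamma_n(g_s, \L g_s)\right)\!(x),
\end{equation*}
and the expression in parentheses is exactly $\Gamma_{n+1}(g_s)$ by the recursive definition \eqref{eqdefgamman} (with $f = h = g_s$, so the two cross terms $\Gamma_n(f, \L h)$ and $\Gamma_n(h, \L f)$ merge into $2\Gamma_n(g_s, \L g_s)$). This yields $\frac{d}{ds}\Lambda_n(s) = 2\P_s(\Gamma_{n+1}(g_s))(x) = 2\Lambda_{n+1}(s)$, as claimed.

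The main technical obstacle is justifying the interchange of $\frac{d}{ds}$ with $\P_s$ and the pointwise evaluation at $x$, and verifying that $\Gamma_n(g_s)$ remains in the domain of $\L$ so that $\L\Gamma_n(g_s)$ makes sense pointwise. For $f \in \C^{\infty}_{\mathrm{c}}(M)$ and the hypoelliptic generator $\L = \Delta - \nabla W\cdot \nabla$, one has $\P_{t-s}f \in \mathcal{C}^\infty(M)$ smooth in both variables by elliptic regularity of the heat equation \eqref{eqheatequationl}, so $\Gamma_n(g_s)$ is smooth and its time and space derivatives exchange freely. Outside a compact set the necessary decay (or integrability against $\P_s(x,\cdot)$) follows from the standard heat-kernel estimates available under the conservative/self-adjoint framework set up in Section 2. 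Once the manipulations are licit, the computation above is purely algebraic and is just the polarization identity for $\Gamma_n$ combined with definition \eqref{eqdefgamman}.
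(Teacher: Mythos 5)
Your proposal is correct and follows essentially the same route as the paper: differentiate $\Lambda_n$ via the chain rule, use $\partial_s\P_s = \P_s\L$ on the outer semigroup and $\partial_s\P_{t-s}f = -\L\P_{t-s}f$ on the inner argument, then recognize $\frac12\L\Gamma_n(g_s) - \Gamma_n(g_s,\L g_s) = \Gamma_{n+1}(g_s)$ by the recursive definition. Your bookkeeping of the factor $2$ is in fact cleaner than the paper's final display, which drops it typographically, and your remarks on justifying the interchange of limits are a reasonable supplement to the paper's purely formal computation.
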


\begin{proof}
Let us define
	\begin{align*}
	\phi_{1}&\colon [0, t] \to \C^{\infty}(M) \cap L^{\infty}(\mu),&  &s \mapsto \phi_{1}(s) := \P_{t-s}f;&\\
	\phi_{2}&\colon \C^{\infty}(M)\cap L^{\infty}(\mu) \to \C^{\infty}(M)\cap L^{\infty}(\mu),&  &u \mapsto \phi_{2}(u) := \Gamma_{n}(u);&\\
	\phi_{3}&\colon \R_{+} \times \C^{\infty}(M)\cap L^{\infty}(\mu) \to \C^{\infty}(M)\cap L^{\infty}(\mu),&  &(s,u) \mapsto \phi_{3}(s,u) := \P_{s}u.&\
	\end{align*}
We can note that $\Lambda_{n}(s) = \phi_{3}(s, \phi_{2}(\phi_{1}(s)))$. Using the chain rule we obtain
	\begin{align}
	\label{eqdergamman}
	\frac{d}{ds}\Lambda_{n}(s) = D_{s} \phi_{3}(s, \Gamma_{n}(\P_{t-s}f)) + D_{u} \phi_{3}(s, \Gamma_{n}(\P_{t-s}f)) D\phi_{2}(\P_{t-s}f)(\phi_{1}'(s)).
	\end{align}
On the other hand, we observe that
	\begin{align*}
	D_{s} \phi_{3}(s,u) &= \lim_{\epsilon \to 0} \frac{\phi_{3}(s+\epsilon, u) - \phi_{3}(s,u)}{\epsilon} = \lim_{\epsilon \to 0} \frac{\P_{s + \epsilon} u - \P_{s} u}{\epsilon} = \lim_{\epsilon \to 0} \P_{s} \left(\frac{\P_{\epsilon} u - \P_{0}u}{\epsilon}\right)\\
	&=\P_{s} \left(\lim_{\epsilon \to 0} \frac{\P_{\epsilon} u - \P_{0}u}{\epsilon}\right)\\
	&= \P_{s} \L u.
	\end{align*}
For the second variable, we observe that $\phi_{3}$ is linear in its second variable, so
$$D_{u} \phi_{3}(s,u)(h) = \phi_{3}(s,h).$$
We know that $\Gamma_{n}(f) = \Gamma_{n}(f,f)$, with $\Gamma_{n}(\cdot, \cdot)$ a symmetric bilinear form, so
$$D\phi_{2}(u)(h) = 2 \Gamma_{n} (u,h).$$
For $\frac{d}{ds} \varphi_{1}$, we note that
	\begin{align*}
	\frac{d}{ds}\phi_{1}(s) = \frac{d}{ds}\left(\phi_{3}(t-s, f)\right)= - \partial_{s}\phi_{3}(t-s, f) = - \P_{t-s} \L f.
	\end{align*}
Finally, mixing all this into (\ref{eqdergamman}), we get
	\begin{align*}
	\frac{d}{ds}\Lambda_{n}(s) &= \P_{s}(\L(\Gamma_{n}(\P_{t-s}f))) - 2\P_{s}(\Gamma_{n}(\P_{t-s}f, \P_{t-s}\L f)) =\P_{s}(\Gamma_{n+1}(\P_{t-s}f)).\
	\end{align*}
\end{proof}

Now the following proposition is just a generalization of one of the consequences of the curvature condition $\mathrm{CD}(\rho_{1}, \infty)$, reflected in Theorem \ref{thmlocalinequalitiescurvature}, in the context of higher iterations of $\Gamma$-calculus.

\begin{proposition}
\label{lemma3}
Let $n \geq 0$ and suppose that there exists a constant $\rho_{n} \in \R$ such that for all $f \in  \C^{\infty}_{\mathrm{c}}(M)$, $\Gamma_{n+1}(f) \geq \rho_{n} \Gamma_{n}(f)$. Then for each $t \geq 0$,
	\begin{equation*}
	\Gamma_{n}(\P_{t}f) \leq e^{-2\rho_{n} t} \P_{t}(\Gamma_{n}(f)).
	\end{equation*}
\end{proposition}

\begin{proof}
Applying Lemma \ref{lemma1} and imposing $\Gamma_{n+1} \geq \rho_{n} \Gamma_{n}$, we get
	\begin{align*}
	\frac{d}{ds} \Lambda_{n}(s) = 2 \P_{s}(\Gamma_{n+1}(\P_{t-s}f)) \geq 2 \rho_{n} \P_{s}(\Gamma_{n}(\P_{t-s}f)) = \Lambda_{n}(s).
	\end{align*}
Then by Gr\"onwall's inequality, we obtain
$$\Lambda_{n}(0) \leq e^{-2\rho_{n} t} \Lambda_{n}(t),$$
so
$$\Gamma_{n}(\P_{t}f) \leq e^{-2\rho_{n} t} \P_{t}(\Gamma_{n}(f)).$$
\end{proof}

Now we present another bound.

\begin{proposition}
\label{lemma2}
Let $n \geq 1$ and suppose that for any $h\in  \C^{\infty}_{\mathrm{c}}(M)$, both $\Gamma_{n-1}(h)$ and $\Gamma_{n+1}(h)$ are non-negative. Then for each $f\in  \C^{\infty}_{\mathrm{c}}(M)$ and for all $t > 0$, we have that
	\begin{equation*}
	\Gamma_{n}(\mathrm{P}_{t} f) \leq \frac{1}{2t} \mathrm{P}_{t}(\Gamma_{n-1}(f)).
	\end{equation*}
\end{proposition}

\begin{proof}
Using Lemma \ref{lemma1}, we get
$$\Lambda_{n}'(s) = 2 \P_{s}(\Gamma_{n+1}(\P_{t+s}f)) \geq 0,$$
so $\Lambda_{n}$ is non-decreasing. Then, for each $t \geq s \geq 0$, we infer that
$$\Gamma_{n}(\P_{t}f) = \P_{0}(\Gamma_{n}(\P_{t}f)) = \Lambda_{n}(0) \leq \Lambda_{n}(s) = \P_{s}(\Gamma_{n}(\P_{t-s}f)).$$
Thus integrating and applying again Lemma \ref{lemma1}, we see that
	\begin{align*}
	t \Gamma_{n}(\P_{t}f) &= \int_{0}^{t} \Gamma_{n}(\P_{t}f) \, ds \leq \int_{0}^{t} \P_{s}(\Gamma_{n}(\P_{t-s})f) \, ds = \frac12 \int_{0}^{t} \Lambda_{n-1}'(s) \, ds\\
	& = \frac12 \left(\Lambda_{n-1}(t) - \Lambda_{n-1}(0)\right)\\
	&= \frac12 \left(\P_{t}(\Gamma_{n-1}(f)) - \Gamma_{n-1}(\P_{t}f)\right),
	\end{align*}
from where we get
$$\Gamma_{n}(\P_{t}f) \leq \frac{1}{2t} \left(\P_{t}(\Gamma_{n-1}(f)) - \Gamma_{n-1}(\P_{t}f)\right) \leq \frac{1}{2t} \P_{t}(\Gamma_{n-1}(f)).$$
\end{proof}

By mixing the two previous propositions, we deduce the following inequality.

\begin{proposition}
\label{lemma4}
Let $n \geq 1$ and suppose that for any $h\in  \C^{\infty}_{\mathrm{c}}(M)$, both $\Gamma_{n-1}(h)$ and $\Gamma_{n+1}(h)$ are non-negative. Additionally, let us assume that there exists a constant $\rho_{n} \in \R$ such that for each $f \in \C^{\infty}_{\mathrm{c}}(M)$, $\Gamma_{n+1}(f) \geq \rho_{n} \Gamma_{n}(f)$. Then for all $t > 0$,
	\begin{equation*}
	\Gamma_{n}(\P_{t}f) \leq \frac1t e^{-\rho_{n} t} \P_{t} (\Gamma_{n-1} f).
	\end{equation*}
\end{proposition}

\begin{proof}
Let $t > 0$. By Proposition \ref{lemma3}:
	\begin{equation}
	\label{lemma4eq1}
	\Gamma_{n}(\P_{t}f) = \Gamma_{n}(\P_{t/2}(\P_{t/2}f)) \leq e^{-\rho_{n} t} \P_{t/2}(\Gamma_{n}(\P_{t/2}f)).
	\end{equation}
Then, applying Proposition \ref{lemma2}, we know that
	\begin{equation}
	\label{lemma4eq2}
	\Gamma_{n}(\P_{t/2}f) \leq \frac1t \P_{t/2}(\Gamma_{n-1}(f)).
	\end{equation}
Therefore, mixing up inequalities (\ref{lemma4eq1}) and (\ref{lemma4eq2}), we finally obtain
$$\Gamma_{n}(\P_{t}f) \leq \frac1t e^{-\rho_{n} t} \P_{t} (\Gamma_{n-1} f).$$
\end{proof}

\begin{remark}
\label{rknonnegative}
Let us note that if $n=1$ and if there exist some constants $\rho_{1}, \rho_{2} > 0$ such that for all $f \in \C^{\infty}_{\mathrm{c}}(M)$, $\Gamma_{3}(f) \geq \rho_{2} \Gamma_{2}(f)$ and $\Gamma_{2}(f) \geq \rho_{1} \Gamma_{1}(f)$, then the non-negativeness hypotheses in Proposition \ref{lemma4} are then satisfied.
\end{remark}

\section{The diffusion transport map on smooth manifolds}
\label{section4}
In this section we prove the main result of this note, Theorem \ref{thmmainresult}, which employs Kim and Milman's heat flow transport map \cite{kimmilman2012} on a weighted manifold setting to obtain a Lipschitz transport map between the prescripted measure on the manifold and a log-Lipschitz perturbation of the former.

Firstly, the construction will be detailed, taking care with the proper details that must be kept with attention when passing from the Euclidean setting towards the manifold one. After that, using the machinery of $\Gamma$-calculus exhibited in Section \ref{section3}, we prove that the heat flow transport map is Lipschitz, giving an explicit bound on its Lipschitz constant.

Here the context will always be given by a complete and connected Riemannian manifold $(M,g)$ with weight $d\mu = e^{-W}\, d \mathrm{Vol}$ and the natural diffusion operator $\L$ given by $\L = \Delta - \nabla W \cdot \nabla$, unless otherwise stated. On the other hand, we assume that there exist constants $\rho_{1}, \rho_{2} > 0$ such that for any $f \in \C^{\infty}_{\mathrm{c}}(M)$, $\Gamma_{2}(f) \geq \rho_{1} \Gamma(f)$ (i.e., $\mathrm{CD}(\rho_{1}, \infty)$) and $\Gamma_{3}(f) \geq \rho_{2} \Gamma_{2}(f)$.
	
We emphasize that under these assumptions, $\L$ is essentially self-adjoint on $\C^{\infty}_{\mathrm{c}}(M)$ and its associated semigroup is conservative (or equivalently, that the associated Markov process does not explode).

\subsection{Construction of the transport map}
Let $V\colon M \to \R$ be a smooth function which is $K$-Lipschitz for the metric $d_{g}$; so, in the light of Proposition \ref{proplipschitzgamma}, $V$ is such that $\sqrt{\Gamma_{1}(V)} \leq K$ uniformly on $M$. Let us define $f=e^{-V}$. The following result justifies the integrability of $f$ with respect to $\mu$ under $\mathrm{CD}(\rho, \infty)$, with $\rho >0$; it corresponds to Herbst's argument\footnote{This unpublished result was developed by Ira Herbst in a letter addressed to Leonard Gross: \url{https://perso.math.univ-toulouse.fr/ledoux/files/2018/10/Herbst.pdf}} (see for example Proposition 5.4.1 in \cite{bakry2013analysis}).

\begin{lem}
\label{lemmalipisintegrable}
Let $V\colon M \to \R$ be a smooth function such that $\sqrt{\Gamma_{1}(V)} \leq K$. If there exists $\rho > 0$ such that $\mathrm{CD}(\rho, \infty)$ holds, then both $V$ and $e^{sV}$ are $\mu$-integrable, for any $s \in \R$. In particular, $e^{-V}$ is in $L^{p}(\mu)$, for each $p \geq 1$.
\end{lem}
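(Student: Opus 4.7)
The approach is Herbst's classical argument. By Theorem \ref{thmpoincareandlogsobolev}(b), the hypothesis $\mathrm{CD}(\rho, \infty)$ with $\rho > 0$ gives $\mu$ a logarithmic Sobolev inequality with constant $2/\rho$. I would apply this inequality to $f = e^{sV/2}$ and use the diffusion chain rule $\Gamma(f) = (s^2/4)\, e^{sV}\, \Gamma(V) \leq (s^2 K^2/4)\, e^{sV}$ to derive, for $Z(s) := \int e^{sV}\, d\mu$, the differential inequality
\begin{equation*}
s Z'(s) - Z(s) \log Z(s) \leq \frac{s^2 K^2}{2\rho} Z(s),
\end{equation*}
which rewrites as $\frac{d}{ds}\bigl[s^{-1} \log Z(s)\bigr] \leq K^2/(2\rho)$. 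Using that $s^{-1} \log Z(s) \to \int V\,d\mu$ as $s \to 0$ and integrating yields the Gaussian Laplace-transform bound
\begin{equation*}
Z(s) \leq \exp\Bigl(s \int V\,d\mu + \frac{s^2 K^2}{2\rho}\Bigr), \qquad s \in \R,
\end{equation*}
which in particular gives $e^{sV} \in L^1(\mu)$ for every $s \in \R$, and hence $e^{-V} \in L^p(\mu)$ for every $p \geq 1$.

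To justify the formal manipulation above (which requires $e^{sV/2}$ to lie in the class of functions for which the LSI is available, and $Z(s)$ to be finite and smooth), I would proceed by truncation: set $V_n := (-n) \vee V \wedge n$, which is bounded and $K$-Lipschitz, mollified if necessary so that $V_n \in \C^{\infty}_{\mathrm{c}}(M)$ with $\sqrt{\Gamma(V_n)} \leq K + o(1)$. Then $Z_n(s) := \int e^{sV_n}\,d\mu$ is smooth with $Z_n(0) = 1$ and $Z_n'(0) = a_n := \int V_n\,d\mu \in [-n, n]$, so the computation above applies verbatim and produces $Z_n(s) \leq \exp(s a_n + s^2 K^2/(2\rho))$. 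Passing to the limit by Fatou's lemma, using $e^{sV_n} \to e^{sV}$ pointwise, yields the claimed bound on $Z(s)$ as soon as $\lim_n a_n = \int V\,d\mu$ is finite, i.e., as soon as $V \in L^1(\mu)$.

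The main obstacle is this last point: there is an apparent circularity between the Laplace-transform bound and the integrability of $V$ (which is needed to make sense of $\int V\,d\mu$ in the exponent). I would break the circularity by first applying the same truncation argument to the $1$-Lipschitz function $D(x) := d_g(x_0, x)$, whose bounded truncations $D_n$ satisfy $Z_n^D(s) \leq \exp(s a_n^D + s^2/(2\rho))$; combining this for $s > 0$ with Chebyshev's inequality gives the sub-Gaussian tail estimate $\mu(\{D_n \geq a_n^D + t\}) \leq \exp(-\rho t^2/2)$, from which the Bobkov-G\"otze equivalence between LSI and sub-Gaussian concentration for Lipschitz functions can be extracted without assuming any a priori integrability of $D$. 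This yields $D \in L^p(\mu)$ for every $p \geq 1$; since $|V(x)| \leq |V(x_0)| + K D(x)$, one concludes $V \in L^p(\mu)$, and feeding this back into the truncated bound above closes the loop.
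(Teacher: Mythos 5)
Your proof is correct and follows exactly the route the paper intends: the paper does not write out a proof of this lemma but simply invokes Herbst's argument (citing Proposition 5.4.1 of Bakry--Gentil--Ledoux), which is precisely the differential inequality for $Z(s)=\int e^{sV}\,d\mu$ that you derive from the log-Sobolev inequality of Theorem \ref{thmpoincareandlogsobolev}(b). Your additional care with the truncation and with the mean-versus-median circularity (broken via the sub-Gaussian concentration of the truncated distance function) supplies the standard technical details handled the same way in the cited reference.
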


Without loss of generality we will assume $\int_{M} f \, d\mu = \int_{M} e^{-V} \, d\mu = 1$ so we can define the probability measure $\nu$ on $M$ as $d\nu := f \, d\mu$. For each $t \geq 0$, set $d\rho_{t} = \P_{t} f\, d\mu$, which is a probability measure, as $\mu$ is invariant for $(\P_{t})_{t \geq 0}$. In particular, we have that the flow $(\rho_{t})_{t \geq 0}$ is such that $\rho_{0} = \nu$ and $\rho_{\infty}:=\lim_{t \to +\infty} \rho_{t} = \mu$ in distribution. Indeed, $f \in \L^{2}(\mu)$ (because of Lemma \ref{lemmalipisintegrable}) and we have that $\L$ is ergodic and its semigroup is conservative, then $\P_{t}f \to 1$ in $L^{2}(\mu)$, which yields the desired convergence.

Now let us define, for each $t \geq 0$ and $x \in M$, $V_{t}(x):= - \log \P_{t} f(x)$, so that $\nabla V_{t}$ is a vector field for each $t \geq 0$. The following proposition shows that actually that $V_{\bullet}$ and $(\rho_{t})_{t \geq 0}$ are connected via the continuity equation.

\begin{proposition}
\label{propcontinuityequation}
The flow $(\rho_{t})_{t \geq 0}$ satisfies the continuity equation
	\begin{equation*}
	\left\{
		\begin{aligned}
		\partial_{t} \rho_{t} + \Div (\rho_{t} \nabla V_{t})&=0, \quad t >0\\
		\rho_{0} &= \nu
		\end{aligned}
	\right.
	\end{equation*}
in the distributional sense.
\end{proposition}

\begin{proof}
Let $t >0$ and $\varphi \in \C^{\infty}_{\mathrm{c}}(M)$. We shall prove that
$$\frac{d}{dt} \int_{M} \phi \, d\rho_{t} = \int_{M} \nabla \varphi \cdot \nabla V_{t}  \, d\rho_{t}.$$
Indeed, as $(\rho_{t})_{t \geq 0}$ is solution of the SDE (\ref{eqsdediffusion}), then it satisfies Fokker-Planck's equation (\ref{eqfokkerplanck}) in the distributional sense, hence we can write
	\begin{align*}
	\frac{d}{dt} \int_{M} \phi \, d\rho_{t} &= \int_{M} \left(\Delta \phi - \nabla W \cdot \nabla \phi \right)\, d\rho_{t} = \int_{M} \Delta \phi e^{-W} \P_{t}f \, d \mathrm{Vol} - \int_{M} \nabla W \cdot \nabla \phi \, d \rho_{t}\\
	&= - \int_{M} \nabla \phi \cdot \nabla (e^{-W} \P_{t}f) \, d \mathrm{Vol} -  \int_{M} \nabla W \cdot \nabla \phi \, d \rho_{t}\\
	&= - \int_{M} \nabla \phi \cdot \nabla \P_{t} f\, d\mu = -\int_{M} \nabla \phi \cdot \frac{\nabla \P_{t} f}{\P_{t} f} \, d \rho_{t}= \int_{M} \nabla \phi \cdot \nabla V_{t} \, d \rho_{t},
	\end{align*}
which was the desired equality.
\end{proof}

Now let us consider the flow of diffeomorphisms $(S_{t})_{t \geq 0}$ induced by the vector field $\nabla V_{t}$:
\begin{equation}
\label{eqodeflowvt}
\left\{
		\begin{aligned}
		\frac{d}{dt}S_{t}(x) &= \nabla V_{t}(S_{t}(x)), \quad t >0\\
		S_{0}(x)&=x.
		\end{aligned}
	\right.
\end{equation}

Our goal will be to characterize the flow $(\rho_{t})_{t \geq 0}$ in terms of $(S_{t})_{t \geq 0}$ using Theorem \ref{thmcontinuityequation}. First of all, it is clear that $\nabla V_{\bullet}$ is smooth (thus locally Lipschitz) as the semigroup $(\P_{t})_{t\geq 0}$ preserves smoothness as $\L$ is an elliptic diffusion operator. It is also clear that $t \mapsto \rho_{t}$ is continuous for the weak topology. Finally, we just have to justify the integrability condition (\ref{eqintegrflowcontinuity}), which will hold under the curvature-dimension condition $\mathrm{CD}(\rho_{1}, \infty)$.

\begin{lem}
\label{lemintegrability}
Let $V_{\bullet}$ be defined as above. If there exists $\rho_{1} >0$ such that $\mathrm{CD}(\rho_{1}, \infty)$ holds, then
$$\int_{0}^{+\infty} \int_{M} \abs{\nabla V_{t}(x)} \, d \rho_{t} \, dt< +\infty.$$
\end{lem}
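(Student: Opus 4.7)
The plan is to reduce the double integral to a quantity controllable by the $\mathrm{CD}(\rho_1,\infty)$ gradient estimate from Theorem \ref{thmlocalinequalitiescurvature}, and to absorb the Lipschitz data of $V$ at the initial time $t=0$. First I would unpack the integrand: since $V_t = -\log \P_t f$, we have $\nabla V_t = -\nabla \P_t f / \P_t f$, so $|\nabla V_t| = \sqrt{\Gamma(\P_t f)}/\P_t f$. Combined with $d\rho_t = \P_t f \, d\mu$, this yields the clean rewriting
$$\int_M |\nabla V_t(x)| \, d\rho_t = \int_M \sqrt{\Gamma(\P_t f)} \, d\mu,$$
where the weight $\P_t f$ cancels cleanly.

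Next I would apply the pointwise gradient bound (iii) of Theorem \ref{thmlocalinequalitiescurvature}, which is available because $\mathrm{CD}(\rho_1,\infty)$ holds: $\sqrt{\Gamma(\P_t f)} \leq e^{-\rho_1 t} \P_t(\sqrt{\Gamma(f)})$. Integrating against $\mu$ and using the invariance of $\mu$ under $(\P_t)_{t \geq 0}$ (i.e.\ $\int_M \P_t g\,d\mu = \int_M g\,d\mu$), I obtain
$$\int_M \sqrt{\Gamma(\P_t f)} \, d\mu \leq e^{-\rho_1 t} \int_M \sqrt{\Gamma(f)} \, d\mu.$$

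It remains to bound the $t=0$ quantity. Since $f = e^{-V}$, the diffusion chain rule gives $\Gamma(f) = e^{-2V}\Gamma(V)$, hence $\sqrt{\Gamma(f)} = e^{-V}\sqrt{\Gamma(V)} \leq K e^{-V} = K f$ by Proposition \ref{proplipschitzgamma} applied to the $K$-Lipschitz potential $V$. Integrating and using the normalization $\int_M f\,d\mu = 1$, we get $\int_M \sqrt{\Gamma(f)}\,d\mu \leq K$, so
$$\int_0^{+\infty} \int_M |\nabla V_t| \, d\rho_t\, dt \leq K \int_0^{+\infty} e^{-\rho_1 t}\, dt = \frac{K}{\rho_1} < +\infty.$$

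The only subtle point is making sure every step is justified in the weighted manifold framework: the cancellation of $\P_t f$ requires $\P_t f > 0$ (which follows from positivity preservation by the heat semigroup of an elliptic diffusion applied to a nonnegative nontrivial datum), and the application of Theorem \ref{thmlocalinequalitiescurvature} to the (possibly non-compactly-supported) function $f = e^{-V}$ may require a brief density/approximation argument, but this is routine given that $f \in L^p(\mu)$ for all $p$ by Lemma \ref{lemmalipisintegrable}. I do not anticipate any real obstacle beyond these standard justifications.
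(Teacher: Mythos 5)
Your proof is correct and uses the same key ingredients as the paper: the gradient commutation estimate (iii) of Theorem \ref{thmlocalinequalitiescurvature} together with $\sqrt{\Gamma(f)} = f\sqrt{\Gamma(V)} \le Kf$. The only cosmetic difference is that the paper cancels $\P_t f$ pointwise to get the uniform bound $\abs{\nabla V_t} \le Ke^{-\rho_1 t}$ before integrating, whereas you integrate against $\mu$ first and invoke invariance; both yield the same bound $K/\rho_1$.
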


\begin{proof}
Under $\mathrm{CD}(\rho_{1}, \infty)$ we may use the commutation between $\Gamma$ and $\P_{t}$ (item (iii) in Theorem \ref{thmlocalinequalitiescurvature}), so we have that for each $t \geq 0$,
$$\sqrt{\Gamma(\P_{t}f)} \leq e^{-\rho_{1}t} \P_{t}(\sqrt{\Gamma(f)}).$$
If we develop $\P_{t}(\sqrt{\Gamma(f)})$, we obtain
$$\P_{t}(\sqrt{\Gamma(f)}) = \P_{t}(\abs{\nabla f}) = \P_{t}(\abs{\nabla V} f).$$
As $V$ is a $K$-Lipschitz potential, then by definition we have that $\abs{\nabla V} \leq K$; thus we finally get
$$\abs{\nabla \P_{t}f} = \sqrt{\Gamma(\P_{t}f)} \leq e^{-\rho_{1}t} K \P_{t}f,$$
so it follows that $\abs{\nabla V_{t}(x)} \leq K e^{-\rho_{1}t}$. Then
$$\int_{0}^{+\infty} \int_{M} \abs{\nabla V_{t}(x)} \, d \rho_{t} \, dt \leq K \int_{0}^{+\infty} e^{-\rho_{1}t} \, dt <+\infty,$$
as $\rho_{1} > 0$.
\end{proof}

As we reunited the hypotheses of Theorem \ref{thmcontinuityequation}, we therefore deduce that for each $t \geq 0$, ${S_{t}}_{\#} \nu = \rho_{t}$. For each $t \geq 0$, let $T_{t} := S_{t}^{-1}$, so ${T_{t}}_{\#} \rho_{t} = \nu$.

If we suppose that for each $t \geq 0$ there exists a constant $K_{t} > 0$ such that $T_{t}$ is $K_{t}$-Lipschitz, and that $K:= \limsup_{t \to +\infty} K_{t} < +\infty$, then Lemma 1 in \cite{mikulincer2022lipschitz} states that (modulo subsequence) for each $x \in M$, the limit $\lim_{t \to +\infty} T_{t}(x)$ exists and moreover, it defines a $K$-Lipschitz mapping $T\colon M \to M$ such that $T_{\#}\mu = \nu$, as a consequence of Arzel\`a-Ascoli's theorem.

Now the following lemma exhibits a condition on $V_{t}$ to ensure that the mapping $T$ is Lipschitz, giving an estimate on its Lipschitz constant.

\begin{lem}
\label{lemboundonvtforlipschitz}
If there exists $\lambda\colon \R_{+} \to \R_{+}$ integrable such that for all $t \geq 0$ and $x\in M$,
$$-\nabla^{2} V_{t}(x)\preceq \lambda(t) g_{x},$$
then $T$ is $\exp\left(\int_{0}^{+\infty} \lambda(t)\, dt\right)$-Lipschitz.
\end{lem}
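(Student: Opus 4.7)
The plan is to control the Lipschitz constant $K_t$ of each $T_t = S_t^{-1}$ uniformly in $t$ by a Jacobi-type variation argument along the flow $(S_t)_{t\geq 0}$, and then to appeal to the Arzelà--Ascoli statement of Lemma 1 of \cite{mikulincer2022lipschitz} recalled just before the lemma in order to pass to the limit $t\to +\infty$.

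Since $T_t\circ S_t = \id$, differentiation at $x$ gives $DT_t(S_t(x))\circ D_x S_t = \id_{T_x M}$, so bounding the operator norm of $DT_t$ from above is equivalent to a linear lower bound on $\abs{D_x S_t(u)}$ in terms of $\abs{u}$, uniformly in $x\in M$ and $u\in T_x M$. I would therefore fix $x$ and $u$, set $J(t):= D_x S_t(u)$, and study this as a vector field along the integral curve $t\mapsto S_t(x)$. Considering the two-parameter variation $\gamma^{\epsilon}(t) := S_t(\exp_x(\epsilon u))$ and differentiating the flow equation (\ref{eqodeflowvt}) in $\epsilon$, the torsion-free identity $\nabla_{\epsilon}\partial_t \gamma = \nabla_t \partial_{\epsilon}\gamma$ yields the Riemannian variation equation
\begin{equation*}
\nabla_t J(t) \;=\; \nabla_{J(t)}\nabla V_t\bigl(S_t(x)\bigr) \;=\; \nabla^{2} V_t\bigl(S_t(x)\bigr)\cdot J(t).
\end{equation*}

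Using compatibility of the Levi-Civita connection with $g$ together with the hypothesis, which rewrites as $\nabla^{2} V_t \succeq -\lambda(t)\,g$, this gives
\begin{equation*}
\frac{d}{dt}\abs{J(t)}^{2} \;=\; 2\,\nabla^{2} V_t\bigl(J(t),J(t)\bigr) \;\geq\; -2\lambda(t)\,\abs{J(t)}^{2}.
\end{equation*}
Grönwall's lemma applied to $\log\abs{J(t)}^{2}$ (the case $u=0$ being trivial by uniqueness) then produces $\abs{J(t)} \geq \abs{u}\exp\!\bigl(-\int_0^{t}\lambda(s)\,ds\bigr)$, so that $T_t$ is $K_t := \exp\!\bigl(\int_0^{t}\lambda(s)\,ds\bigr)$-Lipschitz.

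Since $\lambda$ is integrable on $\R_+$ we have $K_t \leq \exp\!\bigl(\int_0^{+\infty}\lambda(s)\,ds\bigr) =: K < +\infty$ for every $t\geq 0$ and $\limsup_{t\to +\infty} K_t = K$, so the stability result invoked in the paragraph preceding the statement applies and yields a subsequential limit $T = \lim T_t$ which is $K$-Lipschitz and still pushes $\mu$ forward to $\nu$. The main subtlety is the Riemannian formulation of the linearised flow equation: one must work with covariant derivatives along integral curves of the time-dependent field $\nabla V_t$ rather than with naive coordinate differentiation, so that $\tfrac{d}{dt}\abs{J}^{2}\geq -2\lambda(t)\abs{J}^{2}$ follows cleanly from the pointwise Hessian bound; once this is properly set up, the remainder is a one-dimensional Grönwall estimate plus the quoted compactness statement.
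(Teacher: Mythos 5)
Your argument is correct and is essentially the standard proof of this lemma: the paper itself states it without proof, citing Proposition 1 of \cite{fathi2023transportation} and Lemma 2.1 of \cite{neeman2022lipschitz}, whose arguments are precisely this linearisation of the flow, the identity $\frac{d}{dt}\abs{J}^{2}=2\,\nabla^{2}V_{t}(J,J)$, and a Gr\"onwall bound, followed by the Arzel\`a--Ascoli passage to the limit. The only points worth being pedantic about, and which you handle or which the surrounding text already supplies, are that the pointwise bound $\norm{DT_{t}}_{\mathrm{op}}\leq e^{\int_{0}^{t}\lambda}$ upgrades to a global Lipschitz bound because $M$ is connected and complete, and that the limit map transports $\mu$ (not $\rho_{t}$) to $\nu$, which is exactly what the quoted compactness lemma provides.
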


We give just a formal proof (see for example Proposition 1 in \cite{fathi2023transportation} and Lemma 2.1 in \cite{neeman2022lipschitz} for a true proof): if we look at the flow (\ref{eqodeflowvt}), then for any $x, y \in M$ we may define $\alpha(t):= d_{g}^{2}(S_{t}(x),S_{t}(y))$. If we derive it with respect to $t$, let us recall that $\frac{d}{dt}S_{t}(x) = \nabla V_{t}(S_{t}(x))$ and $\frac{d}{dt}S_{t}(y) = \nabla V_{t}(S_{t}(y))$, so the uniform bound $-\nabla^{2} V_{t}(x)\preceq \lambda(t) g_{x}$ allows us to bound it by below:
$$\frac{d}{dt} \alpha(t) \geq - 2 \lambda(t) \alpha(t).$$
Therefore Gr\"onwall's inequality implies that
$$d_{g}(S_{t}(x), S_{t}(y)) \geq d_{g}(x,y) \exp\left(- \int_{0}^{t} \lambda(s)\, ds\right).$$
As $T_{t} = S_{t}^{-1}$, the last inequality yields
$$d_{g}(x,y) \exp\left(\int_{0}^{t} \lambda(s)\, ds\right) \geq d_{g}(T_{t}(x), T_{t}(y));$$
thus $T_{t}$ is $\exp\left(\int_{0}^{t} \lambda(s)\, ds\right)$-Lipschitz, so the limiting map $T$ is $\exp\left(\int_{0}^{+\infty} \lambda(s)\, ds\right)$-Lipschitz.

Now we are ready to state the main result in this section and in this note.

\begin{theorem}
\label{thmmainresult}
Let $(M,g)$ be a complete and connected $d$-dimensional Riemannian manifold. Let $W \colon M \to \R$ be smooth and such that $\int_{M} e^{-W}d \mathrm{Vol} =1$. Consider the diffusion operator $\L=\Delta - \nabla W \cdot \nabla$. Let us assume that there exist constants $\rho_{1}, \rho_{2} > 0$ such that
	\begin{enumerate}[(i)]
	\item $\forall f \in \C^{\infty}_{\mathrm{c}}(M), \Gamma_{2}(f) \geq \rho_{1} \Gamma(f)$; and
	\item $\forall f \in \C^{\infty}_{\mathrm{c}}(M), \Gamma_{3}(f) \geq \rho_{2} \Gamma_{2}(f)$.
	\end{enumerate}
Let $V \colon M\to \R$ smooth and $K$-Lipschitz and define $d \mu = e^{-W}d \mathrm{Vol}$ and $d \nu = e^{-V} d\mu$. Then there exists a Lipschitz mapping $T \colon M \to M$ pushing forward $\mu$ towards $\nu$ which is $\exp\left(\sqrt{\frac{2 \pi}{\rho_{2}}}Ke^{\frac{K^{2}}{2 \rho_{1}}}\right)$-Lipschitz.
\end{theorem}

Before proceeding with the proof of the theorem, we will make a few remarks.

\begin{remark}
	\begin{itemize}
	\item First of all, we observe that the estimate for the Lipschitz constant given by Theorem \ref{thmmainresult} is intrinsecally independent on the dimension $d$ of the manifold $(M, g)$ ---as in Caffarelli's seminal result---, which is quite important for applications: in the next section, we will see how this result applies to transfer functional inequalities from the measure $\mu$ towards a log-Lipschitz perturbation of it, so if we have a dimension-free inequality for $\mu$, then the transported inequality for $\nu$ is dimension-free as well.
	\item Theorem 5 in \cite{fathi2023transportation} gives another estimate for the Lipschitz constant of $T$, in the manifold context as well. The assumptions of that result are similar to those of Theorem \ref{thmmainresult}, but instead of supposing that $\Gamma_{3} \geq \rho_{2} \Gamma_{2}$, it is assumed a uniform bound on $R$, the Riemann tensor of curvature (which fully characterizes curvature in manifolds). The estimates are of the same type, in the sense that both are of the form $O\left(\exp(\exp(K^{2}))\right)$, with $K$ the Lipschitz constant of the potential $V$, but the assumptions are different.
	\end{itemize}
\end{remark}

Before starting with the proof of Theorem \ref{thmmainresult}, we recall the following lemma, which again is a consequence of Herbst's argument (see \cite[p. 298]{MR4568961}).

\begin{lem}
\label{lemma5}
Let $(E, \delta, m)$ be a metric measure space and suppose that $m$ is a probability measure. If $m$ satisfies a log-Sobolev inequality with constant $\lambda_{\mathrm{LS}} > 0$, then for each $g \colon E\to \R$ $C$-Lipschitz, and any $-\infty < q < p < + \infty$,
$$\norm{e^{g}}_{L^{p}(m)} \leq \exp\left(C^{2}\frac{(p-q)}{2\lambda_{\mathrm{LS}}}\right) \norm{e^{g}}_{L^{q}(m)}.$$
\end{lem}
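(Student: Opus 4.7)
The plan is to run the classical Herbst argument: I will study the function
$$F(p) := \norm{e^{g}}_{L^{p}(m)} = \left(\int_{E} e^{pg}\, dm\right)^{1/p}$$
and show that $\frac{d}{dp}\log F(p) \leq \frac{C^{2}}{2\lambda_{\mathrm{LS}}}$ uniformly in $p$; integrating this derivative bound from $q$ to $p$ yields the claimed moment comparison. All the work is to extract the derivative bound from the log-Sobolev inequality applied to a well-chosen test function.

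Concretely, set $H(p) := \int_{E} e^{pg}\, dm$. A direct calculation gives
$$\frac{d}{dp}\log F(p) = \frac{d}{dp}\left(\frac{1}{p}\log H(p)\right) = \frac{1}{p^{2}H(p)}\left(p\int_{E} g e^{pg}\, dm - H(p)\log H(p)\right),$$
so the quantity in parentheses is exactly $\Ent_{m}(e^{pg})$. The log-Sobolev inequality applied with $f = e^{pg/2}$ (so that $f^{2} = e^{pg}$ and $|\nabla f|^{2} = \frac{p^{2}}{4}|\nabla g|^{2} e^{pg}$) together with the $C$-Lipschitz assumption on $g$ (which forces $|\nabla g| \leq C$ $m$-a.e. in the metric measure space sense) gives
$$\Ent_{m}(e^{pg}) \leq \frac{2}{\lambda_{\mathrm{LS}}} \int_{E}|\nabla f|^{2}\, dm \leq \frac{p^{2}C^{2}}{2\lambda_{\mathrm{LS}}} H(p).$$
Plugging back into the expression for $\frac{d}{dp}\log F(p)$ and dividing by $p^{2}H(p) > 0$ yields $\frac{d}{dp}\log F(p) \leq \frac{C^{2}}{2\lambda_{\mathrm{LS}}}$ for every $p \neq 0$.

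Finally, I integrate the derivative bound on $[q,p]$ to get $\log F(p) - \log F(q) \leq \frac{C^{2}(p-q)}{2\lambda_{\mathrm{LS}}}$, which is exactly the desired inequality after exponentiation. The minor technical points I expect to be the main obstacle are: (a) making sure both $H(p)$ and $H(q)$ are finite, which follows from a preliminary Herbst-type integrability of $e^{sg}$ against $m$ for all $s \in \R$ (itself a consequence of the log-Sobolev inequality); (b) handling the possibly singular point $p=0$, which is settled by noting that $F$ extends continuously at $0$ to $\exp\left(\int g\, dm\right)$ (via L'H\^opital) and by integrating the a.e. derivative bound across $0$ through an approximation argument on $[q,-\epsilon]\cup[\epsilon,p]$ and sending $\epsilon \to 0$; and (c) justifying that $|\nabla f|^{2}$ in the log-Sobolev inequality can indeed be bounded by $\frac{p^{2}C^{2}}{4}e^{pg}$ for a merely Lipschitz $g$, which uses that Lipschitz functions have an upper gradient bounded by their Lipschitz constant.
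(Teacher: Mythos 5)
Your proof is correct and is precisely the classical Herbst argument that the paper itself invokes for this lemma (the paper offers no proof, only a citation to Herbst's argument): differentiate $p\mapsto\frac1p\log\int e^{pg}\,dm$, identify the numerator as $\Ent_m(e^{pg})$, bound it via log-Sobolev applied to $f=e^{pg/2}$ with $|\nabla f|^2\le\frac{p^2C^2}{4}e^{pg}$, and integrate from $q$ to $p$, with the standard technical caveats (integrability of $e^{sg}$, the point $p=0$, truncation to apply the inequality to unbounded functions) that you correctly flag. One remark: you read ``constant $\lambda_{\mathrm{LS}}$'' as $\Ent_m(f^2)\le\frac{2}{\lambda_{\mathrm{LS}}}\int|\nabla f|^2\,dm$, which is exactly what produces the stated exponent $\frac{C^2(p-q)}{2\lambda_{\mathrm{LS}}}$; be aware this is the opposite normalization to the paper's own convention in Theorem \ref{thmpoincareandlogsobolev} and to its later use with $\lambda_{\mathrm{LS}}=\frac{1-e^{-2\rho_1 t}}{\rho_1}$, so that discrepancy is internal to the paper and not a flaw in your argument.
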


\begin{proof}[Proof (of Theorem \ref{thmmainresult})]
In view of what has been discussed in this section, and more particularly, in the light of Lemma \ref{lemboundonvtforlipschitz}, we only have to prove that we can find an integrable bound $\lambda$ for $\nabla^{2} \log\P_{t} f$; that is, $\nabla^{2} \log\P_{t} f(x)\preceq \lambda(t) g_{x}$. We note that for each $t > 0$, $x \in M$ and $Y \in T_{x}M$ with $\abs{Y}=1$:	
\begin{align*}
\nabla^{2} \log \P_{t}f(x) (Y, Y)  &= \frac{\nabla^{2} \P_{t} f(x) (Y, Y)}{\P_{t}f(x)} - \abs{ \nabla \log \P_{t}f(x)\cdot Y}^{2}\\
&\leq \frac{\nabla^{2} \P_{t} f(x) (Y, Y)}{\P_{t}f(x)}\\
&\leq \frac{\hs{\nabla^{2} \P_{t} f(x)}}{\P_{t}f(x)};
\end{align*}
that is,
\begin{equation}
\label{eqproofthmmain1}
\nabla^{2} \log \P_{t}f(x) (Y, Y) \leq \frac{\hs{\nabla^{2} \P_{t} f(x)}}{\P_{t}f(x)}.
\end{equation}

On the one hand, in the light of Bochner's formula (or more precisely (\ref{eqgamma2diffusion})), we note that $\hs{\nabla^{2} \P_{t} f} \leq \sqrt{\Gamma_{2}(\nabla^{2} \P_{t} f)}$. On the other hand, as both $\rho_{2}$ and $\rho_{1}$ are positive, then in the light of Remark \ref{rknonnegative}, we may use Proposition \ref{lemma4}; thus we have that $$\Gamma_{2}(\P_{t}f) \leq \frac1t e^{-\rho_{2}t} \P_{t}(\Gamma_{1}(f));$$ thus
\begin{equation}
\label{eqproofthmmain2}
	\hs{\nabla^{2} \P_{t} f}^{2} \leq \frac1t e^{-\rho_{2}t} \P_{t}(\Gamma_{1}(f)).
\end{equation}

Now, let us note that
	\begin{align*}
	\label{eqproofthmmain3}
	\P_{t}(\Gamma_{1}(f)) &= \P_{t}(\abs{\nabla f}^{2}) = \P_{t}(\abs{f \nabla V}^{2})\leq K^{2} \P_{t}(f^{2})\leq K^{2} \exp\left(K^{2}\frac{1-e^{-2\rho_{1} t}}{\rho_{1}}\right)(\P_{t}f)^{2}\\
	&\leq K^{2} \exp\left(\frac{K^{2}}{\rho_{1}}\right)(\P_{t}f)^{2}
	\end{align*}
where we employed Lemma \ref{lemma5}, for $p=2, q=1$ and the measure $m=\P_{t}^{*}\delta_{x}$, as it satisfies a log-Sobolev inequality with constant $\lambda_{\mathrm{LS}} = \frac{1-e^{-2\rho_{1} t}}{\rho_{1}}$ (condition (iv) in Theorem \ref{thmlocalinequalitiescurvature}); so if we mix this with both (\ref{eqproofthmmain1}) and (\ref{eqproofthmmain2}), then we finally get
	\begin{align*}
	\nabla^{2} \log \P_{t}f(x) (Y, Y) \leq  K e^{\frac{K^{2}}{2\rho_{1}}} \frac1{\sqrt{t}} e^{-\frac12 \rho_{2} t},
	\end{align*}
hence the Lipschitz estimate follows from the fact that	
	\begin{align*}
	\int_{0}^{+\infty} \frac1{\sqrt{t}} e^{-\frac12 \rho_{2}t} \, dt = \sqrt{\frac2{\rho_{2}}} \int_{0}^{+\infty} \frac1{\sqrt{s}} e^{-s}\, ds = \sqrt{\frac{2\pi}{\rho_{2}}}.
	\end{align*}
\end{proof}

\section{Applications}
\label{section5}

In this section we give different applications of Theorem \ref{thmmainresult}.

\subsection{Transfer of functional inequalities}

As it is well known, having a Lipschitz map pushing forward a measure towards a log-Lipschitz perturbation of it allows us to transport functional inequalities from the source measure to the image one, as we did in the very beginning of the article in equation (\ref{eqfirstexample}), transporting Gaussian Poincar\'e's inequality. In particular, we recall Theorem \ref{thmpoincareandlogsobolev} which says that under $\mathrm{CD}(\rho, \infty)$ (for $\rho > 0$), then the measure $\mu$ satisfies both log-Sobolev and Poincar\'e's inequalities (both with constant $\frac1\rho$). More precisely, we have the following corollary.

\begin{cor}
\label{cortransfer}
Let $\L$ be a diffusion operator on $(M,g)$ with symmetric and ergodic measure $\mu$ satisfying the hypotheses of Theorem \ref{thmmainresult}. Then any $K$ log-Lipschitz perturbation of the measure $\mu$ satisfies both log-Sobolev and Poincar\'e's inequalities with constant $\frac{C_{K}}\rho$, where $C_{K} = \exp\left(2\sqrt{\frac{2 \pi}{\rho_{2}}}Ke^{\frac{K^{2}}{2 \rho_{1}}}\right)$.
\end{cor}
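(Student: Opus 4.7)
The plan is to combine the Lipschitz transport map produced by Theorem \ref{thmmainresult} with the functional inequalities for $\mu$ guaranteed by Theorem \ref{thmpoincareandlogsobolev}, via the standard pull-back argument already illustrated in equation (\ref{eqfirstexample}).

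First, since $\mathrm{CD}(\rho_{1}, \infty)$ holds with $\rho_{1}>0$, Theorem \ref{thmpoincareandlogsobolev} gives that $\mu$ satisfies both a Poincar� and a log-Sobolev inequality with constants $\tfrac{1}{\rho_{1}}$ and $\tfrac{2}{\rho_{1}}$ respectively. Second, by Theorem \ref{thmmainresult} applied to the $K$-Lipschitz potential $V$, there exists a map $T\colon M\to M$ with $T_{\#}\mu=\nu$ whose Lipschitz constant is
\[
L \;=\; \exp\!\left(\sqrt{\tfrac{2\pi}{\rho_{2}}}\,K\,e^{K^{2}/(2\rho_{1})}\right),
\]
so that $C_{K}=L^{2}$.

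Next I would transfer the inequalities. Let $g\in \C^{\infty}_{\mathrm{c}}(M)$ and set $f=g\circ T$. Since $T$ is $L$-Lipschitz for $d_{g}$, Proposition \ref{proplipschitzgamma} (applied componentwise, equivalently via $|DT|\le L$) gives the chain-rule bound
\[
\Gamma(f)(x) \;=\; \abs{\nabla (g\circ T)(x)}^{2} \;\le\; L^{2}\,\abs{(\nabla g)(T(x))}^{2}.
\]
Applying Poincar�'s inequality for $\mu$ to $f$ and using $T_{\#}\mu=\nu$,
\[
\Var_{\nu}(g) \;=\; \Var_{\mu}(g\circ T) \;\le\; \tfrac{1}{\rho_{1}}\int_{M}\Gamma(g\circ T)\,d\mu \;\le\; \tfrac{L^{2}}{\rho_{1}}\int_{M}\abs{\nabla g}^{2}\,d\nu,
\]
which is the claimed Poincar� inequality with constant $C_{K}/\rho_{1}$.

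For the logarithmic Sobolev inequality the argument is identical: apply the log-Sobolev inequality for $\mu$ to $h\circ T$ for a nonnegative smooth $h$, use $\Ent_{\nu}(h^{2})=\Ent_{\mu}((h\circ T)^{2})$ since $T_{\#}\mu=\nu$, and absorb the factor $L^{2}$ coming from $\Gamma(h\circ T)\le L^{2}(\Gamma(h))\circ T$. There is no real obstacle here beyond bookkeeping: the main content of the corollary is entirely packed into Theorem \ref{thmmainresult}, and the transfer step is the elementary chain-rule computation. The only mild point to check is that we may apply the $\mu$-inequalities to the compositions $g\circ T$ and $h\circ T$, which is justified by a standard truncation/approximation argument since $T$ is globally Lipschitz.
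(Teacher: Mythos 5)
Your proposal is correct and is exactly the argument the paper intends: the corollary is stated without a separate proof, with the transfer justified by the pull-back computation of equation (\ref{eqfirstexample}), which is precisely your chain-rule-plus-change-of-variables step, and your identification $C_{K}=L^{2}$ with $L$ the Lipschitz constant from Theorem \ref{thmmainresult} is the right bookkeeping. The only point worth keeping in a written version is the one you already flag, namely extending the $\mu$-inequalities from $\C^{\infty}_{\mathrm{c}}(M)$ to the Lipschitz compositions $g\circ T$ by density.
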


\begin{remark}
Transfer of functional inequalities for perturbations is a kind of result which has been present in literature for a long time (see for example \cite{MR1305076}, on the preservation of log-Sobolev's inequality for perturbations). Nevertheless, our method allows us to transfer not just inequalities in the form of Poincar\'e or log-Sobolev but isoperimetric and concentration inequalities.
\end{remark}

\subsection{The sphere}

In order to give a more concrete application, we use Theorem \ref{thmmainresult} in the context of the sphere $\mathbb{S}^{d}$. If we consider $\mathbb{S}^{d}$ as a weighted manifold with $\mu = \mathrm{Vol}$ and $\L=\Delta$, the Laplace-Beltrami operator on $\mathbb{S}^{d}$, then following Example \ref{explesphere}, we have that $\Gamma_{2} \geq (d-1) \Gamma$ and $\Gamma_{3} \geq (d-1) \Gamma_{2}$, so applying Theorem \ref{thmmainresult}, we obtain that each $V \colon \mathbb{S}^{d} \to \R$ 1-Lipschitz, then there exists a Lipschitz mapping $T$ pushing forward $\mathrm{Vol}$ towards $e^{-V}\, \mathrm{Vol}$ with Lipschitz constant $\exp\left(\sqrt{\frac{2\pi}{d-1}} e^{\frac1{2(d-1)}}\right)$.

Let us note that if we take $\mathbb{S}^{d}(\sqrt{d})$, the $d$-sphere of radius $\sqrt{d}$, and we set $\mu_{d}$ as the uniform measure on it, then its Ricci tensor is given by $\Ric_{\mathbb{S}^{d}(\sqrt{d})} = \frac{d-1}{d} g_{\mathbb{S}^{d}(\sqrt{d})}$, so it will satisfy $\Gamma_{2} \geq \frac{d-1}{d} \Gamma$ and $\Gamma_{3} \geq \frac{d-1}{d} \Gamma_{2}$; thus we can apply Theorem \ref{thmmainresult} to get a transport map with Lipschitz constant $\exp\left(\sqrt{2\pi\frac{d-1}{d}} e^{\frac1{2 \frac{d-1}{d}}}\right)$. Let us remark that asymptotically the Lipschitz constant converges to $\exp\left(\sqrt{2\pi} e^{\frac12}\right)$ as $d \to + \infty$.

\begin{remark}
It is clear that the Lipschitz constant obtained in this example is worser than the specific one provided in Theorem 3 of \cite{fathi2023transportation}, which stresses the constant curvature of $\mathbb{S}^{d}$. Nevertheless, we think that it illustrates the applicability of Theorem \ref{thmmainresult}; i.e., a diffusion generator $\L$ on a manifold satisfying $\Gamma_{2} \geq \rho_{1} \Gamma$ and $\Gamma_{3} \geq \rho_{2} \Gamma_{2}$ at the same time.
\end{remark}

\subsection{Laguerre generator}

As we saw in Example \ref{explelaguerre}, for any $p \in (0,+\infty)^{d}$ such that for all $1 \leq i \leq d$, $p_{i} \geq \frac32$, then the Laguerre operator $\L_{p}$ is essentially self-adjoint and its associated semigroup conservative, so we may still apply Theorem \ref{thmmainresult}. Thus we have a globally Lipschitz map pushing forward the multivariate gamma distribution towards log-Lipschitz perturbations of it.

For the particular case when $d=1$ and $p=1$, where the ergodic measure $\mu$ corresponds to the exponential distribution on $(0,+\infty)$, following Remark \ref{rkexpdistrlaguerre} we have that $\L_{p}$ is essentially self-adjoint with respect to the class of functions $\C^{\infty}_{\mathrm{c,\, Neu}}((0,+\infty))$, but this makes no difference in the arguments, so we may apply as well Theorem \ref{thmmainresult}.

In the latter case, the operator $-\L_{1}$ has a discrete spectrum equal to $\N$; thus it satisfies Poincar\'e's inequality (with a better constant than the one given by $\mathrm{CD}(\frac12, \infty)$): for each $f \colon (0,+\infty) \to \R$ smooth,
$$\Var_{\mu_{1}}(f) \leq \int_{\R_{+}} x (f'(x))^{2} \, d\mu_{1}(x).$$
In particular, in the light of Corollary \ref{cortransfer}, it is possible to transfer this inequality for any log-Lipschitz perturbation of the exponential measure (for the metric generated by the Laguerre generator $\L_{1}$); i.e., if $\nu$ is the log-Lipschitz perturbation, then there exists $C>0$ such that for any smooth function $f$,
$$\Var_{\nu}(f) \leq C \int_{\R_{+}} y (f'(y))^{2} \, d\nu(y).$$

\begin{remark}
For the exponential measure there is a very rich theory in terms of the study of functional inequalities (see for example \cite{MR1440138} and \cite{MR2438906}); for example, it holds Poincar\'e's inequality with the usual carr\'e du champ operator, $f \mapsto (f')^{2}$: for any $f \colon (0,+\infty) \to \R$ smooth,
\begin{equation}
\label{eqpoincareexpclassical}
\Var_{\mu_{1}}(f) \leq 4 \int_{\R_{+}} (f')^{2}\, d\mu_{1}.
\end{equation}

At first glance, we are not able to use Corollary \ref{cortransfer} to extend the inequality (\ref{eqpoincareexpclassical}) towards log-Lipschitz perturbations (for the metric generated by the Laguerre generator $\L_{1}$, namely $x \mapsto \frac1x$), as the Poincar\'e inequality (\ref{eqpoincareexpclassical}) is stated in terms of the carr\'e du champ operator $f \mapsto (f')^{2}$, which is not compatible with the metric  $x \mapsto \frac1x$. Nevertheless, we observe that the second conclusion in Proposition \ref{propgrowthmongeexp} below allows us to extend the same kind of functional inequalities for its log-Lipschitz (with respect to the metric $x \mapsto \frac1x$) perturbations; that is, we get new functional inequalities, for the carr\'e du champ operator $f \mapsto (f')^{2}$, for these kind of perturbations $\nu$: i.e., there exists $C>0$ such that for any $f \colon (0,+\infty) \to \R$ smooth,
\begin{equation}
\label{eqpoincareexpclassicalperturb}
\Var_{\nu}(f) \leq C \int_{\R_{+}} (f')^{2}\, d\nu.
\end{equation}
\end{remark}

It is known that in dimension one, the diffusion transport map $T$ coincides with the Monge map. The following result provides a new estimate on the growth of Monge's map for $d=1$ and $p \geq \frac32$ or $p=\frac12$ (i.e. when the ergodic measure corresponds to the exponential measure). Let us recall that $T$ is non-decreasing (thus $T' \geq 0$) and in this context, positive, as $T \colon (0,+\infty) \to (0,+\infty)$.

\begin{proposition}
\label{propgrowthmongeexp}
Let $\mu_{p}$ be the gamma distribution on $(0,+\infty)$ and let $V\colon (0,+\infty) \to \R$ be a Lipschitz potential (for the metric $x \mapsto \frac1x$) and let $T$ be the Monge map pushing forward $\mu_{p}$ towards $e^{-V} \mu_{p}$. Then there exists a constant $C > 0$ such that for any $x >0$,
$$0 < T(x) \leq C x.$$
Moreover, $T$ is Lipschitz for the Euclidean metric on $(0,+\infty)$, that is, there exists $C'>0$ such that for any $x > 0$,
$$0 \leq T'(x) \leq C'.$$
\end{proposition}

\begin{proof}
Let us denote by $g$ the metric $x \mapsto \frac1x$. Using Theorem \ref{thmmainresult}, we deduce the existence of a constant $M>0$ such that $\sup_{x >0}\norm{T'(x)}_{\mathrm{op}} \leq M$. Therefore for each $x >0$,
	\begin{align*}
	M &\geq \norm{T'(x)}_{\mathrm{op}} = \sup_{\abs{v}_{g_{x}}=1}\abs{T'(x) \cdot v}_{g_{T(x)}} = \sup_{\abs{v}_{g_{x}}=1}\sqrt{\frac1{T(x)} \left(T'(x) v\right)^{2}}\\
	& =\sup_{\abs{v}_{g_{x}}=1}\sqrt{\frac{x}{T(x)} \left(T'(x) \right)^{2} \abs{v}^{2}_{g_{x}}}  = \sqrt{\frac{x}{T(x)}} T'(x),
	\end{align*}
which yields
$$\frac{d}{dx}\left(\sqrt{T(x)}\right) \leq M \frac1{2\sqrt{x}}.$$
Then if we integrate the last inequality and using the fact that $\lim_{y\to 0} T(y) = 0$, we obtain that for each $x > 0$, $\sqrt{T(x)} \leq M \sqrt{x}$, which yields the first claim in the Proposition for $C=M^{2}$

Now to bound $T'$, we just use the bound on $T$ and the first inequality, namely
$$M \geq \sqrt{\frac{x}{T(x)}} T'(x),$$
yielding the desired conclusion.
\end{proof}

\begin{remark}
Theorem 1.3 in \cite{MR4154935} provides a growth estimate for the derivative of the Monge map pushing forward the Gaussian measure $\gamma_{d}$ onto a log-concave measure on $\R^{d}$ which moreover is log-Lipschitz (satisfying certain technical bounds for the Hessian of its log-density): more precisely, $\norm{\nabla T (x)}_{\mathrm{op}} = O(1 + \abs{x}^{2})$ as $\abs{x} \to \infty$. Proposition \ref{propgrowthmongeexp} states that, for the family of gamma distributions the growth for the derivative of Monge's map pushing forward the measure towards a log-Lipschitz perturbation is $O(1)$ as $x \to \infty$ for the Euclidean metric.
\end{remark}

    \bibliographystyle{alpha}
\bibliography{biblio}

\begin{thebibliography}{\'{E}89}

\bibitem[AS94]{MR1305076}
Shigeki Aida and Ichiro Shigekawa.
\newblock Logarithmic {S}obolev inequalities and spectral gaps: perturbation
  theory.
\newblock {\em J. Funct. Anal.}, 126(2):448--475, 1994.

\bibitem[Bak94]{zbMATH00563664}
Dominique Bakry.
\newblock A remarkable sequence of inequalities for ultraspherical operators.
\newblock {\em C. R. Acad. Sci., Paris, S{\'e}r. I}, 318(2):161--164, 1994.

\bibitem[BE85]{MR0889476}
Dominique Bakry and Michel \'{E}mery.
\newblock Diffusions hypercontractives.
\newblock In {\em S\'{e}minaire de probabilit\'{e}s, {XIX}, 1983/84}, volume
  1123 of {\em Lecture Notes in Math.}, pages 177--206. Springer, Berlin, 1985.

\bibitem[BGL14]{bakry2013analysis}
Dominique Bakry, Ivan Gentil, and Michel Ledoux.
\newblock {\em Analysis and geometry of {M}arkov diffusion operators}, volume
  348 of {\em Grundlehren der mathematischen Wissenschaften [Fundamental
  Principles of Mathematical Sciences]}.
\newblock Springer, Cham, 2014.

\bibitem[BK08]{MR2438906}
Franck Barthe and Alexander~V. Kolesnikov.
\newblock Mass transport and variants of the logarithmic {S}obolev inequality.
\newblock {\em J. Geom. Anal.}, 18(4):921--979, 2008.

\bibitem[BL97]{MR1440138}
Sergei Bobkov and Michel Ledoux.
\newblock Poincar\'{e}'s inequalities and {T}alagrand's concentration
  phenomenon for the exponential distribution.
\newblock {\em Probab. Theory Related Fields}, 107(3):383--400, 1997.

\bibitem[Bre91]{MR1100809}
Yann Brenier.
\newblock Polar factorization and monotone rearrangement of vector-valued
  functions.
\newblock {\em Comm. Pure Appl. Math.}, 44(4):375--417, 1991.

\bibitem[Caf00]{MR1800860}
Luis~A. Caffarelli.
\newblock Monotonicity properties of optimal transportation and the {FKG} and
  related inequalities.
\newblock {\em Comm. Math. Phys.}, 214(3):547--563, 2000.

\bibitem[CE02]{MR1894593}
Dario Cordero-Erausquin.
\newblock Some applications of mass transport to {G}aussian-type inequalities.
\newblock {\em Arch. Ration. Mech. Anal.}, 161(3):257--269, 2002.

\bibitem[CF21]{MR4154935}
Maria Colombo and Max Fathi.
\newblock Bounds on optimal transport maps onto log-concave measures.
\newblock {\em J. Differential Equations}, 271:1007--1022, 2021.

\bibitem[CFJ17]{MR3752535}
Maria Colombo, Alessio Figalli, and Yash Jhaveri.
\newblock Lipschitz changes of variables between perturbations of log-concave
  measures.
\newblock {\em Ann. Sc. Norm. Super. Pisa Cl. Sci. (5)}, 17(4):1491--1519,
  2017.

\bibitem[\'{E}89]{MR1030543}
Michel \'{E}mery.
\newblock {\em Stochastic calculus in manifolds}.
\newblock Universitext. Springer-Verlag, Berlin, 1989.
\newblock With an appendix by P.-A. Meyer.

\bibitem[FMS23]{fathi2023transportation}
Max Fathi, Dan Mikulincer, and Yair Shenfeld.
\newblock Transportation onto log-lipschitz perturbations, 2023.
\newblock arXiv preprint arXiv:2305.03786.

\bibitem[GJY03]{MR1997032}
Anja G\"{o}ing-Jaeschke and Marc Yor.
\newblock A survey and some generalizations of {B}essel processes.
\newblock {\em Bernoulli}, 9(2):313--349, 2003.

\bibitem[Har04]{MR2095937}
Gilles Harg\'{e}.
\newblock A convex/log-concave correlation inequality for {G}aussian measure
  and an application to abstract {W}iener spaces.
\newblock {\em Probab. Theory Related Fields}, 130(3):415--440, 2004.

\bibitem[KM12]{kimmilman2012}
Young-Heon Kim and Emanuel Milman.
\newblock A generalization of {C}affarelli's contraction theorem via (reverse)
  heat flow.
\newblock {\em Math. Ann.}, 354(3):827--862, 2012.

\bibitem[KP21]{klartag2021spectral}
Bo'az Klartag and Eli Putterman.
\newblock Spectral monotonicity under gaussian convolution, 2021.
\newblock To appear in Ann. Fac. Sci. Toulouse Math.

\bibitem[Kra82]{MR0649183}
Allan~M. Krall.
\newblock On boundary values for the {L}aguerre operator in indefinite inner
  product spaces.
\newblock {\em J. Math. Anal. Appl.}, 85(2):406--408, 1982.

\bibitem[Led93]{zbMATH00559130}
Michel Ledoux.
\newblock The {Lie} algebra of iterated gradients of a {Markov} generator.
\newblock {\em C. R. Acad. Sci., Paris, S{\'e}r. I}, 317(11):1049--1052, 1993.

\bibitem[LNP15]{MR3320893}
Michel Ledoux, Ivan Nourdin, and Giovanni Peccati.
\newblock Stein's method, logarithmic {S}obolev and transport inequalities.
\newblock {\em Geom. Funct. Anal.}, 25(1):256--306, 2015.

\bibitem[Lot03]{MR2016700}
John Lott.
\newblock Some geometric properties of the {B}akry-\'{E}mery-{R}icci tensor.
\newblock {\em Comment. Math. Helv.}, 78(4):865--883, 2003.

\bibitem[Mil23]{MR4568961}
Emanuel Milman.
\newblock Reverse {H}\"{o}lder inequalities for log-{L}ipschitz functions.
\newblock {\em Pure Appl. Funct. Anal.}, 8(1):297--310, 2023.

\bibitem[MS23]{mikulincer2022lipschitz}
Dan Mikulincer and Yair Shenfeld.
\newblock On the lipschitz properties of transportation along heat flows.
\newblock In Ronen Eldan, Bo'az Klartag, Alexander Litvak, and Emanuel Milman,
  editors, {\em Geometric Aspects of Functional Analysis: Israel Seminar (GAFA)
  2020-2022}, pages 269--290. Springer International Publishing, Cham, 2023.

\bibitem[Nee22]{neeman2022lipschitz}
Joe Neeman.
\newblock Lipschitz changes of variables via heat flow, 2022.
\newblock arXiv preprint arXiv:2201.03403.

\bibitem[OV00]{otto2000generalization}
Felix Otto and C\'{e}dric Villani.
\newblock Generalization of an inequality by {T}alagrand and links with the
  logarithmic {S}obolev inequality.
\newblock {\em J. Funct. Anal.}, 173(2):361--400, 2000.

\bibitem[Ser22]{serres2022behavior}
Jordan Serres.
\newblock Behavior of the poincar{\'e} constant along the polchinski
  renormalization flow, 2022.
\newblock arXiv preprint arXiv:2208.08186.

\bibitem[She23]{shenfeld2023exact}
Yair Shenfeld.
\newblock Exact renormalization groups and transportation of measures.
\newblock {\em Annales Henri Poincar{\'e}}, 2023.

\bibitem[Vil09]{villani2008optimal}
C\'{e}dric Villani.
\newblock {\em Optimal transport}, volume 338 of {\em Grundlehren der
  mathematischen Wissenschaften [Fundamental Principles of Mathematical
  Sciences]}.
\newblock Springer-Verlag, Berlin, 2009.
\newblock Old and new.

\bibitem[Yos80]{yosida1980functional}
K\^{o}saku Yosida.
\newblock {\em Functional analysis}, volume 123 of {\em Grundlehren der
  Mathematischen Wissenschaften [Fundamental Principles of Mathematical
  Sciences]}.
\newblock Springer-Verlag, Berlin-New York, sixth edition, 1980.

\end{thebibliography}

\end{document}